\documentclass[a4paper, 12pt]{amsart}

\usepackage[english]{babel}
\usepackage[T1]{fontenc}
\usepackage{accents}
\usepackage{amssymb}
\usepackage{amsthm}
\usepackage{amsmath}
\usepackage{mathtools} 
\usepackage{color}
\usepackage{enumerate}
\usepackage[shortlabels]{enumitem}
\usepackage{float}
\usepackage[export]{adjustbox}
\usepackage{hyperref}
\hypersetup{
	colorlinks=true,
	linkcolor=blue, %
	filecolor=magenta, 
	urlcolor=cyan,
	citecolor= blue  %
}
\usepackage[textheight=633pt, inner=2.5cm, outer=2.5cm, voffset= -1.3cm,tmargin=124pt]{geometry}
\usepackage[most]{tcolorbox}
\tcbuselibrary{theorems}

\usepackage{comment} 
\usepackage{etoolbox}

\newcommand{\N}{\mathbb{N}}
\newcommand{\Rplus}{\mathbb{R}_{+}}
\newcommand{\BMO}{\mathrm{BMO}}
\theoremstyle{plain}
\newtheorem{theorem}{Theorem}[section]
\newtheorem{corollary}[theorem]{Corollary}
\newtheorem{lemma}[theorem]{Lemma}
\newtheorem{proposition}[theorem]{Proposition}

\makeatletter
\renewcommand*\l@subsection{\@tocline{2}{0pt}{2.8em}{2.8em}{}}
\makeatother

\theoremstyle{definition}
\newtheorem{definition}[theorem]{Definition}
\newtheorem{conjecture}{Conjecture}

\mathtoolsset{showonlyrefs}

\title[Sharp Reverse Hardy Inequality in BMO]{The Sharp Reverse Hardy Inequality in BMO for Nonincreasing Functions}

\author[A. Caldera]{A. Caldera}

\subjclass[2020]{Primary 26D15; Secondary 47B37, 46E30}
\keywords{Hardy operator, bounded mean oscillation, reverse inequality,
best constant, nonincreasing function}
\thanks{The author was supported by MICIU (Spain) under Grant FPU24/03036.}
\date{}

\begin{document}

\begin{abstract}
    Let \(Hf(x)=x^{-1}\int_0^x f(t)\,dt\) be the Hardy operator on
    \(\mathbb R_+\).  Korenovskii proved that
    \begin{equation}
       \Vert Hf\Vert_{\BMO}\geq \frac{e\alpha_0}{4}\Vert f\Vert_{\BMO},
    \end{equation}
    for every nonincreasing and locally integrable $f$, where $\alpha_0$ is defined by the relation $\Vert H\chi_{(0,1)}\Vert_{\BMO} = \alpha_0 \Vert \chi_{(0,1)}\Vert_{\BMO}$, and conjectured that the factor \(e/4\) could be removed.  We prove this conjecture by showing that every nonincreasing locally integrable $f$satisfies 
    \begin{equation}
       \Vert Hf\Vert_{\BMO}\geq \alpha_0\Vert f\Vert_{\BMO}.
    \end{equation}
    The constant $\alpha_0$ is optimal, with equality for the one-jump functions $\chi_{(0,a)}$.
\end{abstract}

\maketitle
%\tableofcontents

\section{Introduction}
The determination of optimal constants for Hardy-type operators on
monotonicity cones has attracted renewed attention in recent years.  For
nonnegative nonincreasing functions, reverse Hardy inequalities go back to Renaud \cite{Renaud1986} and Milman \cite{Milman1997}.  A closely related line of work studies the oscillation operator $H-I$, which measures the deviation of a function from its Hardy average. Sharp $L^p$ estimates for $H-I$ on the cone of nonnegative nonincreasing functions were obtained in \cite{BozaSoria2011, Kolyada2014,  KruglyakSetterqvist2008}, and were further developed in weighted form and on larger positivity cones in
\cite{BozaSoria2019,Strzelecki2020}. The corresponding discrete questions for the Ces\`aro operator, as well as sharp estimates for the adjoint Hardy and Copson operators, have recently been considered in
\cite{BenSaidBozaSoria2026, BenSaidBozaSoriaReverse2026, BenSaidSinnamon2026, BozaSoria2023, Sinnamon2022}. Besides their intrinsic interest, these results emphasize that restricting an averaging operator to a monotonicity cone may restore lower bounds that are false on the ambient space, and may lead to explicit extremizers and best constants.

The present paper concerns the endpoint counterpart of this circle of
problems. Let
\begin{equation}
    Hf(x)=\frac1x\int_0^x f(t)\,dt,\qquad x>0,
\end{equation}
be the Hardy operator on $\Rplus$.  Xiao \cite{Xiao2000} proved that, on
the cone of nonincreasing functions, the BMO seminorm of $f$ is controlled by that of $Hf$.  Korenovskii \cite{Korenovskii2002} subsequently obtained sharp BMO-BLO estimates and showed that, if
\begin{equation}
    A=\inf_{\substack{f\in L^1_{\mathrm{loc}}(\Rplus), \ f\downarrow\\
                     f\not\equiv\mathrm{constant}}}
       \frac{\Vert Hf\Vert_{\BMO}}{\Vert f\Vert_{\BMO}},
\end{equation}
then
\begin{equation}
    \frac{e\alpha_0}{4}\leq A\leq\alpha_0, \qquad \alpha_0= \frac{\Vert H\chi_{(0,1)}\Vert_{\BMO}}{\Vert \chi_{(0,1)}\Vert_{\BMO}}.
\end{equation}
He conjectured that the factor $e/4$ in the lower estimate could be
removed, or equivalently that the one-jump function $\chi_{(0,1)}$ already determines the optimal constant.

Our main result proves this conjecture.

\begin{theorem}\label{thm:main-introduction}
    Let $f$ be a nonincreasing locally integrable function on $\Rplus$.  Then
    \begin{equation}
        \Vert Hf\Vert_{\BMO}\geq\alpha_0\Vert f\Vert_{\BMO}.
    \end{equation}
    The constant $\alpha_0$ is optimal.  More explicitly, if \(\gamma_1>1\)
    is the nontrivial solution of
    \begin{equation}
        \log\!\left(\frac{\gamma}{1+\log\gamma}\right)
       =\frac{\log\gamma}{1+\log\gamma},
    \end{equation}
    then
    \begin{equation}
        \alpha_0 = \frac{4\log\gamma_1} {\gamma_1(1+\log\gamma_1)} \approx0.52123637.
    \end{equation}
\end{theorem}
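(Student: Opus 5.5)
We aim to prove $\Vert Hf\Vert_{\BMO}\ge \alpha_0\Vert f\Vert_{\BMO}$ for every nonincreasing $f$ by reducing to the structure of the BMO norm of a monotone function, which is governed by intervals. For a nonincreasing $f$ on an interval $I=(a,b)$, the mean oscillation $\frac1{|I|}\int_I|f-f_I|$ equals $\frac{2}{|I|}\int_I (f-f_I)_+$. Moreover, the supremum defining $\Vert f\Vert_{\BMO}$ can be restricted to intervals, and by dilation invariance of both $H$ and BMO we may normalize.

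\textbf{Step 1 (explicit value of $\alpha_0$).} We compute $\Vert\chi_{(0,1)}\Vert_{\BMO}=1/2$, attained on intervals $(a,b)$ whose midpoint is $1$. Since $H\chi_{(0,1)}(x)=\min(1,1/x)$, the optimal intervals for the oscillation of this function have the form $(s,\gamma s)$ straddling $1$, or lie in $[1,\infty)$. Writing the mean oscillation of $\min(1,1/x)$ over an interval, which is explicit in terms of logarithms, and optimizing over the endpoints yields the equation for $\gamma_1$ and the value $\alpha_0=4\log\gamma_1/(\gamma_1(1+\log\gamma_1))$. Optimality of the constant then follows immediately from $f=\chi_{(0,1)}$ together with dilations giving $\chi_{(0,a)}$.

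\textbf{Step 2 (reduction to step functions and a layer-cake representation).} We approximate $f$ by nonincreasing step functions (with lower semicontinuity of the BMO norm under monotone a.e.\ convergence, and convergence of $Hf$), so we may assume $f=c+\sum_k \lambda_k\chi_{(0,t_k)}$ with $\lambda_k\ge0$. Then $Hf=c+\sum_k\lambda_k h(\cdot/t_k)$, where $h(x)=\min(1,1/x)$. Fix an interval $J=(a,b)$ nearly attaining $\Vert f\Vert_{\BMO}$. The task is to produce an interval $I$ such that
\[
\frac1{|I|}\int_I |Hf-(Hf)_I|\ge \alpha_0\,\frac1{|J|}\int_J|f-f_J| .
\]
The natural approach is to test not one interval but a probability measure $\mu$ over intervals $I$, and to use convexity, namely $\sup_I\ge\int(\cdot)\,d\mu$. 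For each interval, $\frac{2}{|I|}\int_I (g-g_I)_+$ is superadditive-friendly only when the crossing point of each summand agrees with that of the sum, so one must choose $\mu$ so that the averaged functional becomes linear in the $\lambda_k$. Concretely, we plan to use intervals of the form $(s,\gamma_1 s)$, with $s$ distributed log-uniformly according to a suitable density built from $J$. This yields a lower bound of the form $\sum_k\lambda_k K(t_k)$, which is then to be compared with $\frac1{|J|}\int_J|f-f_J|=\sum_k \lambda_k\,\phi_J(t_k)$. Here $\phi_J$ is piecewise linear, but it is not linear in the layers, since $f_J$ depends on all $\lambda_k$. One fixes this by using the identity $\frac1{|J|}\int_J|f-f_J|=\frac{2}{|J|}\int_a^{m}(f-f(m))$-type expressions relative to the crossing point $m$, which are linear once $m$ is fixed.

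\textbf{Main obstacle.} The difficulty is the choice of the test measure $\mu$ so that $K(t)\ge\alpha_0\phi_J(t)$ holds for every $t$, pointwise in the layer. This is where the factor $e/4$ is lost in Korenovskii's argument. I would first try the single-jump extremal configuration: set $\mu$ to be the distribution of the optimal interval for $h(\cdot/t)$ as $t$ ranges over $J$ with weight matching $\phi_J$. I would then verify the pointwise inequality by a one-variable calculus argument in $t/a$ and $b/a$, possibly splitting into cases $t\le m$ and $t\ge m$. If this pointwise domination fails near the endpoints of $J$, the fallback is a direct variational argument. In that approach, one shows that among step functions with fixed $\Vert f\Vert_{\BMO}$ restricted to $J$, the ratio is minimized by a single jump, by merging or splitting jumps and checking monotonicity of the ratio.
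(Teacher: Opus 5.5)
Your scaffolding is sensible (layer-cake decomposition, linearizing $\Omega(f;J)$ once the crossing point is fixed, optimality via $\chi_{(0,a)}$). But the estimate that \emph{is} the theorem is never proved: you label it the ``main obstacle'' and offer a first attempt and a fallback, and carry out neither.

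Moreover, your first attempt cannot reach $\alpha_0$. Take $f=\chi_{(0,s)}$ and $J=(0,2s)$. You would need $\int\Omega(Hf;I)\,d\mu(I)\ge\alpha_0/2=\Vert H\chi_{(0,s)}\Vert_{\BMO}$, so $\mu$-almost every test interval must be extremal for $H\chi_{(0,s)}=\min(1,s/x)$.
\begin{itemize}
\item Within your families, only the initial interval $(0,\gamma_1 s)$, with crossing point $\gamma_0 s$, qualifies.
\item Intervals $(r,\gamma_1 r)$ with $r>0$ are never extremal. Contrary to your Step~1, they give an oscillation of only about $0.2$, versus $\alpha_0/2\approx 0.26$.
\item A density spread over scales with weight $\phi_J$ therefore gives a constant strictly below $\alpha_0$.
\end{itemize}

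Nor does the single interval $(0,\gamma_1 s)$ work layerwise for other ratios $\delta=s/t$. In the normalized form $\frac{4s}{t}(Hf(s)-Hf(t))\le\int_0^\infty K(\lambda_f(x)/s)\,dx$ used in the proof of Theorem~\ref{thm: ineq_main_in}, a layer $\chi_{(0,zs)}$ with $1\le z\le 1/\delta$ contributes $4\delta(1-\delta z)$ on the left but only $K(z)$ on the right, with $K$ the kernel of Lemma~\ref{lemma: prop_K}. At $\delta=1/5$, $z=5/2$ one has $K(5/2)\approx 0.389<0.4$. Also $K\equiv 0$ on $[\gamma_1,\infty)$, although such layers contribute positively whenever $\delta<1/\gamma_1$.

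So you would need $\delta$-dependent test measures, which you have not constructed. By a minimax argument their existence is essentially a dual reformulation of the theorem itself, so this is the whole problem, not a one-variable calculus check. The fallback (merging or splitting jumps) has no mechanism either, since both suprema in the ratio move under such operations.

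The missing idea is to abandon layerwise domination. It fails because your fixed-$m$ linear functional must then bound every $f$, whereas only those $f$ for which $s$ really is a crossing point matter. The paper proceeds as follows.
\begin{itemize}
\item It reduces to initial intervals and uses $\Omega(f;(0,t))=\frac{2s}{t}(Hf(s)-Hf(t))$.
\item It tests $Hf$ on the single interval $(0,\gamma_1 s)$, linearized through $(0,\gamma_0 s)$, and writes the result as $\frac{\log\gamma_1}{\gamma_1}\int_0^\infty K(\lambda_f(x)/s)\,dx$.
\item It splits the layers into $\lambda_f\le s$ (normalized mass $a$, handled by $K(z)\ge z$) and $s<\lambda_f\le t$ (measure $c$, mean $m$ of $\lambda_f/s$).
\item It replaces $K$ on $[1,\infty)$ by a convex minorant $\phi$ with $\phi(z)\ge(2-\sqrt z)^2$ on $[1,4]$, so that Jensen compresses the second group to $c\,\phi(m)$.
\item It invokes the crossing condition in aggregated form, $c\le\delta(a+cm)\le a+c$.
\end{itemize}
This last constraint couples all layers and is exactly what excludes the bad configurations above. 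The scalar inequality $a+c\phi(m)\ge 4\delta\bigl(a+c-\delta(a+cm)\bigr)$ then closes the argument.
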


The proof is developed through Sections~2--4. In Section~2 we collect the basic facts about BMO on the cone of nonincreasing functions. In particular, the BMO seminorm reduces to the initial intervals $(0,t)$, and if $s$ is a crossing point of $f$ on $(0,t)$, then
\begin{equation}
    \Omega(f;(0,t)) = \frac{2s}{t}\bigl(Hf(s)-Hf(t)\bigr).
\end{equation}
This identity reduces the theorem to an intervalwise lower estimate for the difference on the right. We also recall in this section the layer-cake representation and compute $\alpha_0$ from the one-jump function.

Section~3 develops the main ingredients for the intervalwise estimate. The layer-cake representation decomposes $f$ into one-jump functions and expresses a suitable difference of two values of $H^2f$ as the integral of a nonnegative interaction kernel $K$. The contribution below the crossing scale is controlled directly. For the contribution above that scale, we construct an explicit convex minorant of $K$, allowing Jensen's inequality to compress all the remaining layers into a single mass. A two-parameter scalar inequality then provides precisely the estimate needed at the crossing point.

In Section~4 these ingredients are combined to prove the sharp local estimate. Inserting it into the mean oscillation of $Hf$ on a canonical interval yields the main theorem. The argument is sharp for $f=\chi_{(0,1)}$, which proves optimality. Together with the known upper bound, the result also gives the sharp two-sided comparison between $\Vert Hf\Vert_{\BMO}$ and $\Vert f\Vert_{\BMO}$.

Finally, Section~5 discusses two conjectural extensions. We first obtain a non-sharp reverse estimate for the discrete Ces\`aro operator and conjecture that the optimal constant on nonincreasing sequences is determined by one-jump sequences. We then consider the spaces $\BMO_p$, $1<p<\infty$, and conjecture that their sharp reverse Hardy constants are again determined by $\chi_{(0,1)}$. The main theorem settles the latter statement at the endpoint $p=1$.

\section{Preliminaries on BMO}

We begin by recalling the definition of $\BMO$ on the positive half-line and then record the properties of nonincreasing functions needed in the proof. For an interval $I\subset\Rplus=(0,\infty)$, set
\begin{equation}
   f_I=\frac1{|I|}\int_I f(x)\,dx,
   \qquad
   \Omega(f;I)=\frac1{|I|}\int_I|f(x)-f_I|\,dx,
\end{equation}
and
\begin{equation}
   \|f\|_{\BMO}=\sup_{I\subset\Rplus}\Omega(f;I).
\end{equation}

The space $\BMO(\Rplus)$ consists of all locally integrable functions for which this quantity is finite. The $\BMO$ seminorm measures local oscillation rather than the size of the function itself. In particular, it is invariant under the addition of constants and vanishes on constant functions; it therefore becomes a norm after passing to functions modulo constants. Every bounded function belongs to $\BMO$, although $\BMO$ also contains unbounded functions.

For a fixed $t > 0$, the average of $f$ over $(0,t)$ is precisely $Hf(t)$. Since $f$ is nonincreasing, this average separates the part of the interval on which $f$ lies above its mean from the part on which it lies below its mean. This motivates the following definition.

\begin{definition}
    Let $f$ be a nonincreasing locally integrable function on $\Rplus$ and $t > 0$. We say that some $0 < s \leq t$ is a \textit{crossing point for $f$ on the interval $(0,t)$} if 
    \begin{equation}
        f(s^+) \leq Hf(t) \leq f(s^-).
    \end{equation}
\end{definition}

Such a crossing point always exists, but it need not be unique. For example, if $f$ is constant on $(0,t)$, then every $s\in(0,t]$ is a crossing point. The same monotonicity also yields our first simplification: the $\BMO$ seminorm can be computed using only intervals of the form $(0,t)$.

\begin{lemma}{\cite[Lemma~1]{Korenovskii2002}}
    Let $f$ be a nonincreasing locally integrable function in $\Rplus$. Then
    \begin{equation}
        \Vert f \Vert_{\BMO} = \sup_{t > 0} \Omega(f; (0,t)).
    \end{equation}
\end{lemma}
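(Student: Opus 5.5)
We have to show that for a nonincreasing $f$ the supremum of $\Omega(f;I)$ over all intervals $I=(a,b)\subset\Rplus$ is attained, or approached, along initial intervals $(0,t)$. One inequality is trivial, since the initial intervals are among all intervals. For the converse, the plan is to show that for every $0<a<b$ there is some $t>0$ with $\Omega(f;(0,t))\ge\Omega(f;(a,b))$.

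\emph{Step 1: formula for the oscillation.} Fix $I=(a,b)$ and let $c=f_I$. Since $f$ is nonincreasing, there is a point $s\in[a,b]$ with $f\ge c$ on $(a,s)$ and $f\le c$ on $(s,b)$. Because $\int_I(f-c)=0$, the positive and negative parts of $f-c$ have equal integrals, so
\begin{equation}
  \Omega(f;I)=\frac{2}{b-a}\int_a^s (f-c)\,dx=\frac{2}{b-a}\sup_{E\subset I}\int_E (f-c)\,dx .
\end{equation}
An equivalent form that is convenient here is
\begin{equation}
  \Omega(f;I)=\frac{2}{|I|}\max_{a\le u\le b}\int_a^u (f-f_I)\,dx,
\end{equation}
and the maximum is attained at a crossing point.

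\emph{Step 2: extend the interval to the left.} Consider the function $\phi(a')=\Omega(f;(a',b))$ for $a'\in(0,a]$. The hope is that one can push $a'$ down to $0$ without decreasing the oscillation, or else find some other $t$ that does the job. We can subtract the constant $f(s)$ from $f$, since $\Omega$ is invariant under adding constants. Then the values of $f$ on $(a',a)$ lie above $f(a^+)\ge c$.

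A cleaner route is a direct comparison. Take $t=b$ and compare $\Omega(f;(0,b))$ with $\Omega(f;(a,b))$ via the supremum formula
\begin{equation}
  \Omega(f;J)=\frac{2}{|J|}\sup_{E}\int_E(f-f_J).
\end{equation}
Write $\mu=f_{(0,a)}\ge f_I=c$. This inequality holds because $f$ is nonincreasing, so the average of $f$ over $(0,a)$ dominates its average over $(a,b)$. Then
\begin{equation}
  f_{(0,b)}=\lambda\mu+(1-\lambda)c, \qquad \lambda=a/b .
\end{equation}
Test with $E=(0,s)$:
\begin{equation}
  \int_0^s (f-f_{(0,b)})=a(\mu-f_{(0,b)})+\int_a^s(f-c)-(s-a)(f_{(0,b)}-c).
\end{equation}
Let $D=\int_a^s(f-c)=\tfrac{b-a}{2}\Omega(f;I)$. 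Using $\mu-f_{(0,b)}=(1-\lambda)(\mu-c)$ and $f_{(0,b)}-c=\lambda(\mu-c)$, we get
\begin{equation}
  \int_0^s(f-f_{(0,b)})=D+(\mu-c)\bigl[a(1-\lambda)-(s-a)\lambda\bigr]=D+(\mu-c)\frac{a(b-s)}{b}\ge D .
\end{equation}

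\emph{Step 3: compare with the normalization.} We then have
\begin{equation}
  \Omega(f;(0,b))\ge \frac{2D}{b},
\end{equation}
but $\frac{2D}{b}$ is smaller than $\frac{2D}{b-a}$, so the normalization loses. This is the main obstacle. To overcome it, I will instead choose the best $t$ and use the extra term $(\mu-c)\frac{a(b-s)}{b}$. If this extra term is too small, i.e.\ $f$ is nearly constant on $(0,a)$ at level $\approx f(a^+)$, we should argue via scaling. For $f$ monotone one expects $t\mapsto\Omega(f;(a,t))$ to behave well, and I would try proving instead that $\Omega(f;(a',b))$ is maximized as a function of $a'\in[0,a]$ at an endpoint. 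I plan to show the function $a'\mapsto(b-a')\,\Omega(f;(a',b))/2=\max_u\int_{a'}^u(f-f_{(a',b)})$ has derivative controlled so that $\Omega$ is quasi-monotone. Concretely, I would differentiate in $a'$ and use $f(a')\ge f_{(a',b)}$ to show that $\frac{d}{da'}\Omega\le0$ whenever $f(a')-f_{(a',b)}$ is at least the current "half-oscillation level". When this fails, the oscillation on $(a',b)$ is dominated by that of a subinterval, and an induction or continuity argument closes it. This derivative sign analysis is where I expect the real work, and I would consult Korenovskii's argument if it stalls.
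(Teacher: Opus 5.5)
There is a genuine gap. Steps~1 and~2 are correct, but the proposal stops where the lemma's content begins. After you observe that $t=b$ loses the factor $(b-a)/b$, you offer only a plan: quasi-monotonicity of $a'\mapsto\Omega(f;(a',b))$, followed by an unspecified ``induction or continuity argument''. That plan cannot work as stated, because it keeps the right endpoint $b$ fixed and never uses $f$ to the right of $b$.

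Take $f=\chi_{(0,9/10)}$ and $I=(1/2,1)$. Then $\Omega(f;I)=8/25$. On the other hand, $\Omega(f;(0,t))=0$ for $t\le 9/10$, and $\Omega(f;(0,t))=\frac{9(t-9/10)}{5t^2}\le\frac{9}{50}$ for $9/10<t\le 1$. Moreover $a'\mapsto\Omega(f;(a',1))=\frac{9/10-a'}{5(1-a')^2}$ is increasing on $[0,1/2]$, so its maximum is at $a'=a$, which tells you nothing. So no initial interval $(0,t)$ with $t\le b$ dominates $\Omega(f;I)$ in general. Any proof must use some $t>b$ together with the fact that $f\le f(b^-)$ on $(b,\infty)$. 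The paper gives no proof of this lemma (it is quoted from Korenovskii), so your argument has to close on its own, and as written it does not.

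The missing idea is to choose $t=\max\{b,2s\}$, where $s\in(a,b)$ is the crossing point of $f$ on $I$. If no such interior point exists, then $\Omega(f;I)=0$ and there is nothing to prove. Put $B=f_{(a,s)}$ and $C=f_{(s,b)}$. Your Step~1 gives
\[
\Omega(f;I)=\frac{2(s-a)(b-s)}{(b-a)^2}(B-C).
\]
For every $t\ge b$, testing the supremum formula with $E=(0,s)$ gives
\begin{equation*}
\Omega(f;(0,t))\ \ge\ \frac{2s(t-s)}{t^2}\bigl(f_{(0,s)}-f_{(s,t)}\bigr)\ \ge\ \frac{2s(t-s)}{t^2}\,(B-C).
\end{equation*}
The second inequality holds because $f\ge f(a^+)\ge B$ on $(0,a)$ and $f\le f(b^-)\le C$ on $(b,t)$. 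In your Step~2 you only used $\mu\ge c$; the sharper bound $\mu\ge B$ is what is needed.

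Now split into two cases.
\begin{enumerate}[(i)]
\item If $2s\ge b$, take $t=2s$. The factor equals $\frac12$, and $\frac12\ge\frac{2(s-a)(b-s)}{(b-a)^2}$ by AM--GM.
\item If $2s<b$, take $t=b$. Then $s(b-a)^2-(s-a)b^2=a\bigl(b(b-2s)+as\bigr)>0$, i.e.\ $\frac{s}{b^2}\ge\frac{s-a}{(b-a)^2}$.
\end{enumerate}
In either case $\Omega(f;(0,t))\ge\Omega(f;I)$, which proves the nontrivial inequality.
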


The next lemma gives an exact formula for the mean oscillation on each such interval in terms of a crossing point.

\begin{lemma}{\label{lemma: Omega_nonincreasing}}
    Let $f$ be a nonincreasing locally integrable function on $\Rplus$ and $t > 0$. If $0 < s \leq t$ is a crossing point of $f$ on the interval $(0,t)$, then
    \begin{equation}
        \Omega(f; (0,t))  = \dfrac{2s}{t}\Big( Hf(s) - Hf(t)).
    \end{equation}
\end{lemma}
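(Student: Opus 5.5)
Let $m=Hf(t)=f_{(0,t)}$. The plan is to use the crossing point $s$ to split $(0,t)$ into the part where $f$ lies above its mean and the part where it lies below, and then to exploit the fact that the integral of $f-m$ over $(0,t)$ vanishes.

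First, because $f$ is nonincreasing and $f(s^+)\le m\le f(s^-)$, we have $f(x)\ge f(s^-)\ge m$ for a.e.\ $x\in(0,s)$ and $f(x)\le f(s^+)\le m$ for a.e.\ $x\in(s,t)$. The point $x=s$ itself has measure zero and can be ignored. Hence
\begin{equation}
  t\,\Omega(f;(0,t))=\int_0^s (f-m)\,dx+\int_s^t (m-f)\,dx .
\end{equation}

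Next, by the definition of $m$,
\begin{equation}
  \int_0^t (f-m)\,dx=0,
\end{equation}
so the two integrals above are equal. Therefore
\begin{equation}
  t\,\Omega(f;(0,t))=2\int_0^s (f-m)\,dx=2\bigl(sHf(s)-sm\bigr)=2s\bigl(Hf(s)-Hf(t)\bigr).
\end{equation}
Dividing by $t$ gives the formula in Lemma~\ref{lemma: Omega_nonincreasing}.

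The only delicate point is the case $s=t$, where the integral over $(s,t)$ is empty. The argument still goes through: both sides equal $2\int_0^t(f-m)\,dx$, which is $0$, and $Hf(s)-Hf(t)=0$. So no separate treatment is needed, and I do not expect any real obstacle beyond checking the sign of $f-m$ a.e.\ on each side of $s$, which is exactly what the crossing-point definition provides.
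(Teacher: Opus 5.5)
Your proposal is correct and follows essentially the same route as the paper: split $(0,t)$ at the crossing point, use the sign of $f-Hf(t)$ on each side, and evaluate. The only cosmetic difference is that you use $\int_0^t (f-Hf(t))\,dx=0$ to double the first integral, whereas the paper computes $\int_s^t (Hf(t)-f)\,dx$ directly; the two computations rest on the same identity.
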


\begin{proof}
    Let $0 < s \leq t$ be the crossing point of $f$ on the interval $(0,t)$. Then
    \begin{equation}
        \begin{aligned}
            \Omega(f; (0,t)) & = \dfrac{1}{t} \int_0^t |f(x) - f_{(0,t)}| \, dx \\
            & = \dfrac{1}{t} \int_0^t |f(x) - Hf(t)| \, dx \\
            & = \dfrac{1}{t} \bigg(\int_0^s (f(x) - Hf(t)) \, dx + \int_s^t (Hf(t)-f(x)) \, dx \bigg) \\
            & = \dfrac{1}{t} \bigg(sHf(s) - sHf(t) + (t-s)Hf(t) + sHf(s) - tHf(t) \bigg) \\
            & = \dfrac{2s}{t}\Big( Hf(s) - Hf(t)).
        \end{aligned}
    \end{equation}
\end{proof}

Thus, on the cone of nonincreasing functions, the mean oscillation on $(0,t)$ is completely encoded by the difference $Hf(s)-Hf(t)$. Consequently, the proof of the reverse $\BMO$ inequality will reduce to obtaining a sharp lower estimate for this difference in terms of an appropriate oscillation of $Hf$.

To estimate these differences, we shall use a second consequence of monotonicity. For $u > 0$, let $\lambda_f(u)=\bigl| \{x>0: f(x)>u\}\bigr|$ be the distribution function of $f$. It is easy to check that for nonnegative nonincreasing functions $f$,
\begin{equation}
    f(x)=\int_0^\infty\chi_{(0,\lambda_f(u))}(x) \, du.
\end{equation}
This formula, usually called the \emph{layer cake representation} of $f$, therefore decomposes $f$ into a superposition of one-jump functions. An immediate application of Fubini's theorem also yields a similar statement for the Hardy operator,
\begin{equation}{\label{ec: layer_H_rem}}
    \begin{aligned}
        Hf(x) & = \int_0^{\infty} H\chi_{(0, \lambda_f(t))}(x) \, dt \\
        & = \int_0^{\infty} \min \bigg \{1, \dfrac{\lambda_f(t)}{x} \bigg\} \, dt \\
        & = \dfrac{1}{x} \int_{\{t > 0: \lambda_f(t) \leq x \}} \lambda_f(t) \, dt + |\{t > 0: \lambda_f(t) > x \}|.
    \end{aligned}
\end{equation}

% \begin{remark}{\label{rem: layer_H}}
%     Let $f$ be a nonnegative, nonincreasing, and locally integrable function on $\Rplus$ and $r > 0$. Using the layer cake and Fubini's theorem it follows that
%     \begin{equation}{\label{ec: layer_H_rem}}
%         \begin{aligned}
%             Hf(r) & = \dfrac{1}{r}\int_0^r f(x) \, dx \\
%             & = \dfrac{1}{r}\int_0^r \int_0^{\infty} \chi_{(0, \lambda_f(t))}(x) \, dt \, dx \\
%             & = \int_0^{\infty} \dfrac{1}{r} \int_0^r \chi_{(0, \lambda_f(t))}(x) \, dx \, dt \\
%             & = \int_0^{\infty} \min \bigg \{1, \dfrac{\lambda_f(x)}{r} \bigg\} \, dx \\
%             & = \dfrac{1}{r} \int_{\{x > 0: \lambda_f(x) \leq r \}} \lambda_f(x) \, dx + |\{x > 0: \lambda_f(x) > r \}|
%         \end{aligned}
%     \end{equation}
% \end{remark}

The preceding representation explains the special role played by interval indicators: they are the elementary building blocks of every nonnegative nonincreasing function. In particular, the function $\chi_{(0,1)}$ provides the natural candidate for the extremal profile. We now record the constant obtained from this function. Let $\gamma_1>1$ be the nontrivial solution of
\begin{equation}\label{eq:gamma-ams}
   \log\!\left(\frac{\gamma}{1+\log\gamma}\right)
   =
   \frac{\log\gamma}{1+\log\gamma}.
\end{equation}
Put
\begin{equation}
   \gamma_0=\frac{\gamma_1}{1+\log\gamma_1}, \quad \alpha_0 = \dfrac{4\log \gamma_1}{\gamma_1(1+ \log \gamma_1)}
\end{equation}
Thus \(\log\gamma_0=\log\gamma_1/(1+\log\gamma_1)\), and
\begin{equation}
   \gamma_1\approx4.648717541,\quad
   \gamma_0\approx1.83266, \quad 
   \alpha_0\approx0.52123637.
\end{equation}

The following computation, due to Korenovskii, identifies the intervals on which the $\BMO$ seminorms of $\chi_{(0,1)}$ and its Hardy average are attained. It also shows that the constant $\alpha_0$ is an upper bound for the best constant in the reverse inequality.

\begin{lemma}{\cite[Lemma 2]{Korenovskii2002}}{\label{lemma: f_chi_0,1}}
    For the function $f(x) = \chi_{(0,1)}(x)$, $x \geq 0$, the following relations are valid:
    \begin{equation}
        \begin{aligned}
            & \Vert f \Vert_{\text{BMO}} = \Omega(f; (0,2)) = \dfrac{1}{2}, \\
            & \Vert Hf \Vert_{\text{BMO}} = \Omega(Hf; (0,\gamma_1)) = \dfrac{\alpha_0}{2}.
        \end{aligned}
    \end{equation}
\end{lemma}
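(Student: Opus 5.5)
We compute both seminorms directly, using Korenovskii's Lemma~1 to restrict to initial intervals $(0,t)$ and Lemma~\ref{lemma: Omega_nonincreasing} to evaluate $\Omega$ through a crossing point.

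\emph{The function $f$.} For $f=\chi_{(0,1)}$ we have $Hf(t)=\min\{1,1/t\}$. If $t\le 1$, $f$ is constant on $(0,t)$ and $\Omega=0$. If $t>1$, the crossing point is $s=1$, because $f(1^+)=0\le 1/t\le 1=f(1^-)$. Hence
\begin{equation}
\Omega(f;(0,t))=\frac{2}{t}\Bigl(1-\frac1t\Bigr).
\end{equation}
Writing $u=1/t$, this is $2u(1-u)$, which is maximized at $u=1/2$, i.e.\ $t=2$, with value $1/2$.

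\emph{The function $Hf$.} Set $g=Hf$. It equals $1$ on $(0,1]$ and $1/x$ for $x>1$, so it is nonincreasing and continuous. For $x>1$,
\begin{equation}
Hg(x)=\frac{1+\log x}{x},
\end{equation}
and $Hg\equiv 1$ on $(0,1]$.

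Fix $t>1$, since for $t\le1$ the oscillation vanishes. Because $g$ is continuous, the crossing point $s$ solves $g(s)=Hg(t)$. Since $Hg(t)<1$, we get $s=t/(1+\log t)$, which satisfies $1<s\le t$. Lemma~\ref{lemma: Omega_nonincreasing} then gives
\begin{equation}
\Omega(g;(0,t))=\frac{2s}{t}\bigl(Hg(s)-Hg(t)\bigr)=\frac{2}{t}\bigl(1+\log s\bigr)-\frac{2s}{t}\cdot\frac{1+\log t}{t}.
\end{equation}
The last term equals $2/t$, because $s(1+\log t)=t$. Therefore
\begin{equation}
\Omega(g;(0,t))=\frac{2\log s}{t}=\frac{2}{t}\bigl(\log t-\log(1+\log t)\bigr)=:\phi(t).
\end{equation}

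\emph{Maximizing $\phi$ on $(1,\infty)$.} We have $\phi(t)\to0$ as $t\to1^+$ and as $t\to\infty$, and $\phi>0$ in between. Put $L=\log t$. Setting $\phi'(t)=0$ gives
\begin{equation}
\frac{L}{1+L}=\log t-\log(1+\log t)=\log\frac{t}{1+\log t},
\end{equation}
which is exactly the defining equation of $\gamma_1$. At $t=\gamma_1$, using this equation,
\begin{equation}
\phi(\gamma_1)=\frac{2}{\gamma_1}\cdot\frac{\log\gamma_1}{1+\log\gamma_1}=\frac{\alpha_0}{2}.
\end{equation}
Consequently $\|Hf\|_{\BMO}=\alpha_0/2$. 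Together with $\|f\|_{\BMO}=1/2$, this gives $\|Hf\|_{\BMO}/\|f\|_{\BMO}=\alpha_0$, consistent with the definition of $\alpha_0$.

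\emph{Main obstacle.} The remaining step is to show that the critical point $\gamma_1>1$ is unique, so that it is the global maximum of $\phi$. To do this, let $\psi(t)=t\phi'(t)/2$. Since $\phi>0$ near $1^+$ and $\phi\to0$ at $\infty$, at least one interior critical point exists. Uniqueness follows if $\psi$ changes sign only once, which I would verify by showing $\psi$ is monotone in $L$ on the relevant range. If monotonicity is awkward, it is enough to compare $\phi$ at all critical points: each has value $2L/(t(1+L))$, which is decreasing in $t$ for large $t$. This step could be checked numerically, but it deserves an honest monotonicity argument.
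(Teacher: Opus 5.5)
Your computations are correct. For $f=\chi_{(0,1)}$ you get $\Omega(f;(0,t))=\frac2t\bigl(1-\frac1t\bigr)$, with maximum $\frac12$ at $t=2$. For $g=Hf$, the crossing point $s=t/(1+\log t)$ gives $\Omega(g;(0,t))=F(t):=2\log s/t$. The critical-point equation of $F$ is exactly the equation defining $\gamma_1$, and $F(\gamma_1)=\alpha_0/2$. The paper does not prove this lemma; it quotes it from Korenovskii, so your direct computation is the natural route. (Rename your $\phi$: the paper already uses that letter for the convex minorant of $K$.)

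The gap is the step you flag yourself. Nothing in the proposal shows that $\gamma_1$ is the global maximizer of $F$ on $(1,\infty)$, and without that the identity $\Vert Hf\Vert_{\BMO}=\Omega(Hf;(0,\gamma_1))$ is not proved. Neither suggested route closes it as stated:
\begin{itemize}
\item $t\,F'(t)/2$ is not monotone on $(1,\infty)$: it vanishes at $t=1$, becomes positive, then negative. You do not identify a range on which monotonicity would suffice.
\item Knowing that the critical value $2L/(t(1+L))$ decreases for large $t$ says nothing about critical points at moderate $t$.
\item A numerical check is not a proof.
\end{itemize}
This uniqueness is the same fact that makes ``the nontrivial solution'' in the definition of $\gamma_1$ well defined. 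If you take that definition at face value, your existence argument already finishes: a maximizer exists, must be a critical point, and hence equals $\gamma_1$. A self-contained proof should still verify it.

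The fix is short. Put $L=\log t$. Then
\[ \frac{t^2}{2}F'(t)=\Psi(L):=\log(1+L)-L+\frac{L}{1+L},\qquad \Psi''(L)=-\frac{1}{(1+L)^2}-\frac{2}{(1+L)^3}<0, \]
and $\Psi(0)=0$, $\Psi'(0)=1>0$, $\Psi(L)\to-\infty$ as $L\to\infty$. A strictly concave function with these properties has exactly one zero $L_1>0$. It is positive on $(0,L_1)$ and negative on $(L_1,\infty)$. Hence $\gamma_1=e^{L_1}$ is the unique nontrivial root of the defining equation. It follows that $F$ increases on $(1,\gamma_1]$ and decreases on $[\gamma_1,\infty)$, so $\sup_{t>1}F(t)=F(\gamma_1)=\alpha_0/2$. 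With this inserted, your argument is complete.
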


\section{The interaction kernel and convex compression}

We now develop the kernel and convexity estimates underlying the sharp local inequality.

\begin{lemma}{\label{lemma: prop_K}} 
    Define, for every $z > 0$, the function
    \begin{equation}
        K(z) = \dfrac{\gamma_1}{\log \gamma_1} \Big( H^2\chi_{(0,1)}(\gamma_0/z) - H^2\chi_{(0,1)}(\gamma_1/z) \Big), \quad K(0) = K(+\infty) = 0.
    \end{equation}
    Then, the following relations hold:
    \begin{enumerate}[(i)]
        \item For every $z > 0$,
        \begin{equation}\label{eq:kernel-ams}
           K(z)= \begin{cases}
              z(1-\log z), & 0<z\leq\gamma_0,\\[2mm]
              \displaystyle
              \frac{\gamma_1+z(\log z-1-\log \gamma_1)}{\log \gamma_1}, &\gamma_0\leq z\leq\gamma_1,\\[3mm] 
              0, &z\geq\gamma_1.
            \end{cases}
        \end{equation}
        
        \item For $0 < z \leq 1$, $K(z) \geq z$.
        
        \item $K|_{[1, \infty)}$ has a convex minorant $\phi$ such that $0 \leq \phi \leq K|_{[1, \infty)}$ and 
        \begin{equation}{\label{ec: ineq_phi}}
            \phi(z) \geq (2-\sqrt{z})^2, \quad 1 \leq z \leq 4.
        \end{equation}
    \end{enumerate}
\end{lemma}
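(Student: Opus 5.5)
The plan is to compute $H^2\chi_{(0,1)}$ explicitly and substitute, so that (i) is pure algebra. Then (ii) and (iii) become elementary one-variable inequalities, with the only delicate point being a near-tight numerical comparison close to $z=4$.

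\textbf{Step 1: the formula (i).} Since $H\chi_{(0,1)}(x)=\min\{1,1/x\}$, a direct integration gives
\begin{equation}
H^2\chi_{(0,1)}(x)=1 \ (0<x\le 1),\qquad H^2\chi_{(0,1)}(x)=\frac{1+\log x}{x}\ (x\ge 1).
\end{equation}
Write $L=\log\gamma_1$, so that $1/\gamma_0=(1+L)/\gamma_1$ and $\log\gamma_0=L/(1+L)$.
\begin{itemize}
\item For $z\le\gamma_0$, both arguments $\gamma_0/z$ and $\gamma_1/z$ are $\ge1$. Substituting and using these two relations, the bracket collapses to $\frac{z}{\gamma_1}L(1-\log z)$. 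Multiplying by $\gamma_1/L$ gives $K(z)=z(1-\log z)$.
\item For $\gamma_0\le z\le\gamma_1$, the first term equals $1$ and the second equals $\frac{z}{\gamma_1}(1+L-\log z)$, which produces the middle formula.
\item For $z\ge\gamma_1$, both terms equal $1$, so $K(z)=0$.
\end{itemize}
No use of the defining equation of $\gamma_1$ beyond these relations is needed. Continuity at the breakpoints is automatic, because $H^2\chi_{(0,1)}$ is continuous.

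\textbf{Step 2: the bound (ii) and nonnegativity.} Since $\gamma_0>1$, for $0<z\le1$ we have $K(z)=z(1-\log z)\ge z$ because $\log z\le 0$. We also record that $K\ge0$ everywhere. Indeed, $H\chi_{(0,1)}$ is nonincreasing, hence so is its average $H^2\chi_{(0,1)}$, and $\gamma_0/z<\gamma_1/z$.

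\textbf{Step 3: the minorant (iii).} I would take
\begin{equation}
\phi(z)=(2-\sqrt z)^2 \ \ (1\le z\le4),\qquad \phi(z)=0\ \ (z\ge4).
\end{equation}
\begin{itemize}
\item \emph{Convexity.} On $[1,4]$ we have $\phi''=z^{-3/2}>0$, and $\phi(4)=\phi'(4)=0$. So $\phi$ is $C^1$ and convex on $[1,\infty)$, nonnegative, and satisfies \eqref{ec: ineq_phi} with equality.
\item \emph{The range $z\ge 4$.} Here $\phi=0\le K$ by Step 2.
\item \emph{The range $[1,\gamma_0]$.} Put $w=\sqrt z$. The inequality $K\ge\phi$ becomes $h(w)=4w-4-2w^2\log w\ge0$ on $[1,\sqrt{\gamma_0}]$. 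We have $h''(w)=-4\log w-6<0$, so $h$ is concave and it suffices to check the endpoints. At $w=1$ we get $h(1)=0$. At $w=\sqrt{\gamma_0}\approx1.354$ we get $h\approx0.30>0$.
\item \emph{The range $[\gamma_0,4]$.} Set $D=K-\phi$. Here $D''(z)=\frac1{Lz}-z^{-3/2}$, which vanishes only at $z=L^2\approx2.36$. Hence $D$ is concave on $[\gamma_0,L^2]$ and convex on $[L^2,4]$.
  \begin{itemize}
  \item On the concave piece it suffices to check both endpoints. At $z=\gamma_0$ this follows from the previous range, and $D(L^2)$ is evaluated numerically.
  \item On the convex piece I would locate the unique critical point of $D$, or else bound $D$ below by its tangent line at $L^2$, and verify positivity.
  \end{itemize}
  At $z=4$ one gets $D(4)=(\gamma_1+4(\log4-1-L))/L\approx0.031$, which is positive but small.
\end{itemize}

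\textbf{Main obstacle.} The tight margin at $D(4)\approx0.031$ is the expected difficulty. It requires rigorous numerical enclosures of $\gamma_1$, obtained for instance by bisection on the defining equation, sufficient to certify the sign of $D$ throughout $[L^2,4]$. If the convex piece proves awkward, the fallback is to split $[L^2,4]$ into a few subintervals. On each subinterval, use that $K$ and $\phi$ are both decreasing there, since $K'=(\log z-L)/L<0$ and $\phi'<0$. Then $K(b)-\phi(a)>0$ on each subinterval $[a,b]$ certifies $D>0$ on that subinterval.
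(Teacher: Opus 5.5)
Your proposal is correct. Parts (i) and (ii) are as in the paper, but for (iii) you take a genuinely different route.

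\textbf{The paper's construction.} The paper takes $\phi$ to be the line through $(1,1)$ tangent to $K$ at $\tau\approx2.486$, followed by $K$ itself on $[\tau,\infty)$. This is in fact the greatest convex minorant of $K|_{[1,\infty)}$. The paper must then prove $\phi\ge(2-\sqrt z)^2$, using a linear-versus-convex endpoint comparison on $[1,\tau]$ and a monotonicity argument on $[\tau,4]$.

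\textbf{Your construction.} Your choice $\phi=(2-\sqrt z)^2$ on $[1,4]$, with $\phi=0$ beyond, makes convexity and \eqref{ec: ineq_phi} automatic. It avoids $\tau$ and the gluing, and reduces everything to $K\ge(2-\sqrt z)^2$ on $[1,4]$. It is also fully adequate downstream: Lemma~\ref{lemma: scalar_lem} and the Jensen step use only $0\le\phi\le K$ on $[1,\infty)$, convexity, and \eqref{ec: ineq_phi}.

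\textbf{What the paper's argument would buy you.} It gives a cleaner verification of that core inequality. With $L=\log\gamma_1$,
\[
D'(z)=\frac{\log z}{L}-2+\frac{2}{\sqrt z}=\frac{2}{L}\Bigl(1-\frac{1}{\sqrt z}\Bigr)\Bigl(\frac{\sqrt z\,\log\sqrt z}{\sqrt z-1}-L\Bigr).
\]
Since $u\mapsto u\log u/(u-1)$ is increasing on $(1,\infty)$, this gives $D'\le0$ on the whole of $[\gamma_0,4]$ once $\log 4<L$, i.e.\ $\gamma_1>4$. Moreover, $D(4)=K(4)>0$ is not delicate. It follows from $\gamma_0/4<1<\gamma_1/4$ together with $(1+\log x)/x<1$ for $x>1$.

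Consequently, several of your steps become unnecessary:
\begin{itemize}
\item the concave/convex split at $L^2$;
\item the numerical value $D(L^2)$;
\item even the check $h(\sqrt{\gamma_0})>0$, which now follows from $D(\gamma_0)\ge D(4)$.
\end{itemize}
The only numerical input left is $\gamma_1>4$, so your ``main obstacle'' largely disappears.

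\textbf{On $[L^2,4]$.} There is no critical point to locate there. $D'$ increases on this interval and $D'(4)=\log 4/L-1<0$, so $D$ simply decreases to $D(4)>0$. Your tangent line at $L^2$ also works, but with a margin of only about $0.005$ at $z=4$.
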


\begin{proof}
    $(i)$ First, note that for every $z > 0$,
    \begin{equation}
        H^2\chi_{(0,1)}(z)= \begin{cases}
          1, & 0 < z \leq 1,\\[2mm]
          \displaystyle
          \frac{1+\log z}{z}, & z > 1.\\[3mm] 
        \end{cases}
    \end{equation}
    If $0 < z \leq \gamma_0$,
    \begin{equation}
        \begin{aligned}
            K(z) & = \dfrac{\gamma_1}{\log \gamma_1} \bigg(\frac{1+\log (\gamma_0/z)}{\gamma_0/z} - \frac{1+\log (\gamma_1/z)}{\gamma_1/z}\bigg) \\
            & = z \dfrac{\gamma_1}{\log \gamma_1} \bigg( \dfrac{1+\log \gamma_0}{\gamma_0} - \dfrac{1+\log \gamma_1}{\gamma_1} - \log z \bigg( \dfrac{1}{\gamma_0} - \dfrac{1}{\gamma_1} \bigg)\bigg) \\
            & = z \dfrac{\gamma_1}{\log \gamma_1} \bigg( \dfrac{\log \gamma_1}{\gamma_1} - \log z \dfrac{\log \gamma_1}{\gamma_1}\bigg)  = z (1- \log z).
        \end{aligned}
    \end{equation}
    On the other hand, if $\gamma_0\leq z\leq\gamma_1$,
    \begin{equation}
        K(z) = \dfrac{\gamma_1}{\log \gamma_1} \bigg( 1 - \frac{1+\log (\gamma_1/z)}{\gamma_1/z}\bigg) = \dfrac{\gamma_1+z(\log z-1- \log \gamma_1)}{\log \gamma_1}.
    \end{equation}
    The left case $z \geq \gamma_1$ is immediate.

    $(ii)$ Suppose that $0 < z \leq 1$. Then $K(z) = z(1- \log z) \geq z$.

    $(iii)$ A simple computation yields
    \begin{equation}
        K'(z)= \begin{cases}
        -\log z, & 0<z<\gamma_0,\\[2mm]
        \displaystyle
        \frac{\log z-\log\gamma_1}{\log\gamma_1},
        & \gamma_0<z<\gamma_1,\\[3mm]
        0, & z>\gamma_1.
        \end{cases}, \quad 
        K''(z)= \begin{cases}
            -\dfrac{1}{z}, & 0<z<\gamma_0,\\[3mm]
            \displaystyle
            \dfrac{1}{z\log\gamma_1},
            & \gamma_0<z<\gamma_1,\\[3mm]
            0, & z>\gamma_1.
            \end{cases}
    \end{equation}
    Therefore the function $K$ is concave on $[1, \gamma_0]$ and convex on $[\gamma_0, \infty)$. Let $\tau \in (\gamma_0, \gamma_1)$ be the point at which the line through $(1,1)$ is tangent to $K$, i.e.
    \begin{equation}
        \dfrac{K(\tau)-1}{\tau-1} = K'(\tau)
    \end{equation}
    Equivalently,
    \begin{equation}
        \tau - \log \tau = \gamma_1 - 2 \log \gamma_1, \quad \tau \approx 2.486350912.
    \end{equation}
    Set $\sigma = (\log \gamma_1 - \log \tau)/\log \gamma_1$, and define
    \begin{equation}
        \phi(y) = \begin{cases}
            1-\sigma(y-1), & 1 \leq y \leq \tau,\\[1mm]
            K(y), & y \geq \tau.
            \end{cases}
    \end{equation}
    Clearly $0 \leq \phi \leq K|_{[1, \infty)}$ and $\phi$ is convex, so $\phi$ is a convex minorant of $K|_{[1, \infty)}$. Finally, let us show that $\phi(y) \geq (2-\sqrt{y})^2$ for every $1 \leq y \leq 4$. For $\tau \leq y \leq 4$, put $G(y) = K(y) - (2- \sqrt{y})^2$. Then
    \begin{equation}
        \begin{aligned}
            G'(y) & = \dfrac{\log y}{\log \gamma_1} - 2 + \dfrac{2}{\sqrt{y}} = \dfrac{2\log \sqrt{y}}{\log \gamma_1} - 2 + \dfrac{2}{\sqrt{y}} \\
            & = \dfrac{2}{\log \gamma_1}\bigg(1-\dfrac{1}{\sqrt{y}}\bigg) \bigg( \dfrac{\sqrt{y} \log \sqrt{y}}{\sqrt{y}-1} - \log \gamma_1 \bigg).
        \end{aligned}
    \end{equation}
    Moreover the function $q(y) = (y \log y)/(y-1)$ is increasing on $(1, \infty)$, which yields
    \begin{equation}
        G'(y) \leq \dfrac{2}{\log \gamma_1}\bigg(1-\dfrac{1}{\sqrt{y}}\bigg) \bigg( 2\log 2 - \log \gamma_1 \bigg) \leq 0.
    \end{equation}
    Since $G(4) = K(4) > 0$, this proves \eqref{ec: ineq_phi} on $[\tau, 4]$. On $[1,\tau]$, the function $(2- \sqrt{x})^2$ is convex and lies below $\phi$ at both endpoints, while $\phi$ is linear. Hence \eqref{ec: ineq_phi} holds on the whole interval $[1,4]$.
\end{proof}

The estimate $K(z)\geq z$ for $0<z\leq1$ will control the layers lying below the reference scale. For $z\geq1$, the kernel is not convex on its whole support. We therefore replace it from below by the convex function $\phi$ constructed above. This is the key step that will allow us to apply Jensen's inequality and compress all the layers above the reference scale into their total mass and first moment.

After this compression, the relevant information is encoded by four scalar parameters: the contribution $a$ below the reference scale, the mass $c$ above it, the normalized first moment $m$, and the ratio $\delta$ of the two scales. The crossing-point condition will later imply $c\leq\delta(a+cm)\leq a+c$. The next lemma extracts the precise scalar inequality needed from these constraints.

\begin{lemma}{\label{lemma: scalar_lem}}
    Let $a,c\geq0$, $m\geq1$, and $0<\delta\leq1$.  If $\phi$ is the function from Lemma~\ref{lemma: prop_K} and $c\leq\delta(a+cm)\leq a+c$, then
    \begin{equation}\label{eq:scalar-goal}
       a+c\phi(m) \geq 4\delta\bigl(a+c-\delta(a+cm)\bigr).
    \end{equation}
\end{lemma}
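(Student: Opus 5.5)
We prove the inequality by maximizing the right-hand side over $\delta$ for fixed $a,c,m$, and then checking the resulting two-variable inequality with the bound $\phi(m)\ge(2-\sqrt m)^2$ from Lemma~\ref{lemma: prop_K}(iii).

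\textbf{Reduction.} Put $P=\delta(a+cm)$. The right-hand side of \eqref{eq:scalar-goal} is
\begin{equation}
R(\delta)=4\delta(a+c)-4\delta^2(a+cm),
\end{equation}
a concave quadratic in $\delta$ with vertex at $\delta^*=\frac{a+c}{2(a+cm)}$. The hypothesis $c\le\delta(a+cm)$ forces $\delta\ge\delta_0:=c/(a+cm)$; the constraints $\delta\le1$ and $P\le a+c$ are not needed. The case $c=0$ is trivial ($\delta\le 1$ gives $a\ge 4\delta(a-\delta a)$), so assume $c>0$. We bound $R(\delta)$ by its maximum over $\delta\ge\delta_0$ and split according to where the vertex lies. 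Note that $\delta^*\le\delta_0$ exactly when $a\le c$.

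\textbf{Case $a>c$.} Here it suffices to show $a+c\phi(m)\ge R(\delta^*)=(a+c)^2/(a+cm)$. Expanding, this is equivalent to
\begin{equation}
ac\bigl(\phi(m)+m-2\bigr)+c^2\bigl(m\phi(m)-1\bigr)\ge0 .
\end{equation}
\begin{itemize}
\item If $m\ge4$: then $\phi(m)+m-2\ge2$ and $m\phi(m)-1\ge-1$, so $a>c$ gives the claim.
\item If $1\le m\le4$: use $\phi(m)\ge(2-\sqrt m)^2$. This gives $\phi+m-2\ge2(\sqrt m-1)^2$. Writing $u=\sqrt m(2-\sqrt m)\in[0,1]$, we have $1-u=(\sqrt m-1)^2$, so
\begin{equation}
m\phi-1\ge u^2-1=-(\sqrt m-1)^2(u+1)\ge-2(\sqrt m-1)^2 .
\end{equation}
Since $a>c$, the first term dominates and the sum is nonnegative.
\end{itemize}

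\textbf{Case $a\le c$.} The maximum over $\delta\ge\delta_0$ is attained at $\delta_0$, where $R(\delta_0)=4ac/(a+cm)$. With $x=a/c$, the goal becomes $(x+\phi(m))(x+m)\ge4x$.
\begin{itemize}
\item If $m\ge4$: this is immediate from $\phi\ge0$, since $(x+\phi)(x+m)\ge xm\ge4x$.
\item If $1\le m\le4$: pair $x$ with $m$ and $\phi$ with $x$ in Cauchy--Schwarz to get
\begin{equation}
(x+\phi)(m+x)\ge x\bigl(\sqrt m+\sqrt{\phi}\bigr)^2\ge x\bigl(\sqrt m+2-\sqrt m\bigr)^2=4x .
\end{equation}
\end{itemize}

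The only delicate point is the case $a>c$ with $m\in[1,4]$. There the sign of $m\phi-1$ is not controlled by $\phi\ge0$ alone. The square-root lower bound \eqref{ec: ineq_phi} is exactly what makes both of its factors comparable to $(\sqrt m-1)^2$, so that the positive term can absorb the negative one.
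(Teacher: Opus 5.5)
Your proof is correct and follows essentially the same route as the paper. In both, the concave quadratic in $\delta$ is bounded by its vertex value $(a+c)^2/(a+cm)$ when $a>c$ and by its value $4ac/(a+cm)$ at $\delta_0=c/(a+cm)$ when $a\le c$, and each case is then closed using only $\phi\ge 0$ and $\phi(m)\ge(2-\sqrt m)^2$ on $[1,4]$. The differences are bookkeeping only. For $a>c$ you expand directly to $2c(a-c)(\sqrt m-1)^2\ge 0$, whereas the paper goes through $\phi(m)\ge(3-m)/(1+m)$ and the factorization $(r-1)(m-1)^2$. Your Cauchy--Schwarz step is the paper's identity $(x+(2-\sqrt m)^2)(x+m)-4x=(x-\sqrt m(2-\sqrt m))^2$ in another form.
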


\begin{proof}
    If $c=0$, the assertion is just
    $4\delta(1-\delta)\leq1$.  Suppose that $c>0$, and put $r=a/c$
    After division by $c$, the hypothesis becomes 
    \begin{equation}
        \dfrac{1}{r+m}\leq\delta\leq \dfrac{r+1}{r+m},
    \end{equation}
    and it remains to prove
    \begin{equation}\label{eq:r-goal}
       r+\phi(m) \geq 4\delta(r+1)-4\delta^2(r+m) = q(\delta).
    \end{equation}  
    Assume first that $r\geq1$. The quadratic $q$ attains its maximum at $\delta = (r+1)(2(r+m))^{-1}$, and so
    \begin{equation}{\label{ec: max_q}}
        q(\delta) \leq q\bigg( \dfrac{r+1}{2(r+m)}\bigg) = \frac{(r+1)^2}{r+m}.
    \end{equation}
    For every \(m\geq1\),
    \begin{equation}\label{eq:rational-bound}
       \phi(m)\geq\frac{3-m}{1+m}.
    \end{equation}
    Indeed, this is immediate from $\phi\geq0$ when $m\geq3$. For
    $1\leq m\leq3$, it follows from Lemma~\ref{lemma: prop_K} that
    \begin{equation}
        \begin{aligned}
            \phi(m) \geq (2-\sqrt m)^2 & = (2-\sqrt m)^2-\frac{3-m}{1+m} + \frac{3-m}{1+m} \\
            & = \frac{(\sqrt m-1)^4}{1+m} + \frac{3-m}{1+m} \\
            & \geq \frac{3-m}{1+m}.
        \end{aligned}
    \end{equation}
    Furthermore,
    \begin{equation}
        \frac{3-m}{1+m} \geq\frac{2r+1-rm}{r+m},
    \end{equation}
    because the difference after multiplication by the positive
    denominators is \((r-1)(m-1)^2\).  These estimates yield
    \begin{equation}
        r+\phi(m) \geq r+\frac{2r+1-rm}{r+m} = \frac{(r+1)^2}{r+m} \geq q(\delta),
    \end{equation}
    which proves \eqref{eq:r-goal} in this case.
    
    Now assume that $0\leq r\leq1$.  Since $\delta\geq1/(r+m)$,  the quadratic $q$ is decreasing on the allowed range, and hence
    \begin{equation}\label{eq:small-r-bound}
       q(\delta) \leq q\bigg( \dfrac{1}{r+m}\bigg) = \frac{4r}{r+m}.
    \end{equation}
    If $m \geq 4$, then $q(\delta) \leq r \leq r + \phi(m)$.  If
    $1\leq m\leq4$, we conclude by Lemma~\ref{lemma: prop_K} that
    \begin{equation}
        r+\phi(m) \geq r + (2-\sqrt m)^2 = \dfrac{4r+ \Big(r-\sqrt{m}(2- \sqrt{m})\Big)^2}{r+m} \geq \frac{4r}{r+m} \geq q(\delta).
    \end{equation}
    This completes the proof.
\end{proof}

It remains to connect the kernel $K$ with the original function $f$. Since every nonnegative nonincreasing function is a superposition of one-jump functions, the layer-cake representation allows us to integrate the preceding computation over its level sets. Using also the commutation of the Hardy operator with dilations, we obtain the following exact representation.

\begin{lemma}{\label{prop: properties_P}}
     Let $f$ be a  nonnegative, nonincreasing and locally integrable function on $\Rplus$, and let $r > 0$. If $K$ is the function defined in Lemma~\ref{lemma: prop_K}, then
    \begin{equation}
         H^2f(\gamma_0 r) - H^2f(\gamma_1 r)  = \dfrac{\log \gamma_1}{\gamma_1}\int_0^{\infty} K\bigg( \dfrac{\lambda_f(x)}{r} \bigg) \, dx.
    \end{equation}
\end{lemma}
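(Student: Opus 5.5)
The plan is to apply the layer-cake representation twice, together with the dilation invariance of $H$, and then read off the definition of $K$.

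\emph{Step 1: one-jump functions.} First I would record that $H$ commutes with dilations: if $g_\lambda(x)=g(x/\lambda)$, then $Hg_\lambda(x)=Hg(x/\lambda)$. This follows from the substitution $t=\lambda u$ in $\frac1x\int_0^x g(t/\lambda)\,dt$. Iterating gives $H^2 g_\lambda(x)=H^2 g(x/\lambda)$. Applied to $g=\chi_{(0,1)}$, so that $g_\lambda=\chi_{(0,\lambda)}$, this yields
\[
H^2\chi_{(0,\lambda)}(x)=H^2\chi_{(0,1)}(x/\lambda), \qquad \lambda>0 .
\]
Levels with $\lambda=0$ contribute $0$ on both sides. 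Levels with $\lambda=\infty$ (that is, $f>x$ everywhere) give the constant $1$, so they contribute $1-1=0$ to the difference. This matches the convention $K(+\infty)=0$.

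\emph{Step 2: layer-cake for $H^2$.} Next I would write $f(y)=\int_0^\infty\chi_{(0,\lambda_f(x))}(y)\,dx$. Since all integrands are nonnegative, Tonelli's theorem applied twice (exactly as in \eqref{ec: layer_H_rem}, now for $H$ composed with itself) gives
\[
H^2f(s)=\int_0^\infty H^2\chi_{(0,\lambda_f(x))}(s)\,dx=\int_0^\infty H^2\chi_{(0,1)}\bigl(s/\lambda_f(x)\bigr)\,dx .
\]
I would apply this at $s=\gamma_0 r$ and at $s=\gamma_1 r$, and subtract the two integrands pointwise. With $z=\lambda_f(x)/r$, the integrand becomes $H^2\chi_{(0,1)}(\gamma_0/z)-H^2\chi_{(0,1)}(\gamma_1/z)$. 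By the definition of $K$ in Lemma~\ref{lemma: prop_K}, this equals $\frac{\log\gamma_1}{\gamma_1}K(z)$, which is the claimed identity.

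\emph{Main obstacle.} The only delicate point is justifying the subtraction, since it could produce $\infty-\infty$. Local integrability of $f$ makes $H^2f(s)$ finite: $Hf\le$ const$\cdot\frac1x\int_0^x f$, and $\int_0^s Hf(x)\,dx=\int_0^s f(t)\log(s/t)\,dt$. The last integral is finite near $0$ because $f$ is nonincreasing and locally integrable, so $tf(t)\to0$ and $f\log(1/t)$ is integrable. Hence both integrals are finite and can be subtracted termwise. Alternatively, one can note that the integrand difference is nonnegative and equals $\frac{\log\gamma_1}{\gamma_1}K\ge0$, which avoids the issue entirely once finiteness of one side is known.
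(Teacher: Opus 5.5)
Your Steps 1 and 2 are exactly the paper's proof: layer-cake decomposition, Tonelli applied to $H$ twice, the dilation identity $H^2\chi_{(0,\lambda)}(x)=H^2\chi_{(0,1)}(x/\lambda)$, and then the definition of $K$. Your treatment of the levels with $\lambda_f=0$ or $\lambda_f=\infty$ is a correct detail that the paper leaves implicit.

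The error is in your ``main obstacle'' paragraph. Local integrability of a nonincreasing $f\geq0$ does \emph{not} make $H^2f(s)$ finite. From $tf(t)\to0$ you only get $f(t)=o(1/t)$, and that says nothing about integrability of $f(t)\log(1/t)$ at $0$. For instance, take $f(t)=\chi_{(0,e^{-2})}(t)/\bigl(t\log^2(1/t)\bigr)$. It is nonnegative, nonincreasing and integrable, with $\int_0^x f=1/\log(1/x)$ for $x<e^{-2}$. Hence $Hf(x)=1/\bigl(x\log(1/x)\bigr)$ is not integrable near $0$, and $H^2f\equiv+\infty$. The left-hand side of the lemma is then $\infty-\infty$. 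The right-hand side is $+\infty$: using $K(z)=z(1-\log z)$ for $z\leq1$, one checks that $\int_{\{\lambda_f\leq r\}}K(\lambda_f(x)/r)\,dx=H^2f(r)-f(r^+)$.

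Your own computation $\int_0^s Hf=\int_0^s f(t)\log(s/t)\,dt$ does show something correct: $H^2f(s)<\infty$ for one (equivalently every) $s>0$ exactly when $Hf$ is integrable near $0$. Under that hypothesis your ``alternatively'' argument is the right justification. Write $A(x)=H^2\chi_{(0,1)}\bigl(\gamma_0 r/\lambda_f(x)\bigr)$ and $B(x)=H^2\chi_{(0,1)}\bigl(\gamma_1 r/\lambda_f(x)\bigr)$. Then $A-B\geq0$, because $H^2\chi_{(0,1)}$ is nonincreasing and $\gamma_0<\gamma_1$. So $\int A=\int(A-B)+\int B$ with $\int B=H^2f(\gamma_1 r)<\infty$, and the subtraction is legitimate.

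The paper's proof is silent on this point too. The assumption costs nothing in the application: if $Hf$ is not integrable near $0$, then $Hf\notin\BMO$ and the main inequality is trivial. So the repair is to state integrability of $Hf$ near $0$ as a hypothesis, not to derive it from local integrability of $f$.
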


\begin{proof}
    First, using the layer cake representation, Fubini's theorem and the fact that Hardy's operator commutes with dilations it is easy to see, as in \eqref{ec: layer_H_rem}, that
    \begin{equation}
        H^2f(x) = \int_0^{\infty} H^2\chi_{(0, \lambda_f(t))}(x) \, dt = \int_0^{\infty} H^2\chi_{(0, 1)}\bigg(\dfrac{x}{\lambda_f(t)} \bigg) \, dt,
    \end{equation}
    for every $x > 0$. Hence, given $r > 0$,
    \begin{equation}
         \begin{aligned}
             H^2f(\gamma_0 r) - H^2f(\gamma_1 r)  & = \int_0^{\infty} \bigg( H^2\chi_{(0, 1)}\bigg(\dfrac{\gamma_0 r}{\lambda_f(t)} \bigg) - H^2\chi_{(0, 1)}\bigg(\dfrac{\gamma_1 r}{\lambda_f(t)} \bigg)\bigg) \, dt \\
             & = \dfrac{\log \gamma_1}{\gamma_1}\int_0^{\infty} K\bigg( \dfrac{\lambda_f(x)}{r} \bigg) \, dx.
         \end{aligned}
    \end{equation}
\end{proof}

\section{The main results}

We now combine the preceding lemmas to prove the sharp local estimate and then the main BMO inequality.

\begin{theorem}{\label{thm: ineq_main_in}}
    Let $f$ be a nonincreasing locally integrable function on $\Rplus$ and $t > 0$. If $0 < s \leq t$ is a crossing point of $f$ on the interval $(0,t)$, then
    \begin{equation}
        \dfrac{4\log \gamma_1}{\gamma_1}\bigg( Hf(s) - Hf(t)\bigg) \leq \dfrac{t}{s} \bigg( H^2f(\gamma_0 s) - H^2f(\gamma_1 s)\bigg).
    \end{equation}
    Moreover, the constant $(4\log \gamma_1)/\gamma_1$ is the best possible.
\end{theorem}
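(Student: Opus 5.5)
The plan is to reduce the inequality to the scalar statement of Lemma~\ref{lemma: scalar_lem}, using the layer-cake formulas \eqref{ec: layer_H_rem} and Lemma~\ref{prop: properties_P}.

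\emph{Normalization.} Both sides are unchanged when a constant is added to $f$. Both sides only involve $f$ on $(0,\max\{t,\gamma_1 s\})$. So I first normalize $f$ to be nonnegative, shifting so that the relevant part of $f$ sits above level $0$. This normalization has to be done compatibly with the crossing condition. The natural choice is to subtract $f(T^-)$ for a large $T$ and truncate beyond $T$, so that $f\ge 0$ and $\lambda_f$ is finite. Put $\delta=s/t\in(0,1]$ and $z(x)=\lambda_f(x)/s$.

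\emph{Translation into layers.} By \eqref{ec: layer_H_rem},
\[
Hf(s)=\int_0^\infty \min\{1,z\}\,dx = a+c, \qquad a=\int_{\{z\le 1\}} z\,dx,\quad c=|\{z>1\}|.
\]
Likewise $Hf(t)=\int_0^\infty\min\{1,\delta z\}\,dx$. By Lemma~\ref{prop: properties_P}, the right-hand side of the theorem equals $\delta^{-1}\frac{\log\gamma_1}{\gamma_1}\int_0^\infty K(z)\,dx$. Thus the claim is equivalent to
\[
\int_0^\infty K(z)\,dx\;\ge\;4\delta\bigl(Hf(s)-Hf(t)\bigr).
\]

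\emph{Bounding the kernel integral from below.} I split the integral at $z=1$.
\begin{itemize}
\item[(1)] On $\{z\le1\}$, Lemma~\ref{lemma: prop_K}(ii) gives $\int_{\{z\le1\}}K(z)\,dx\ge a$.
\item[(2)] On $\{z>1\}$, I use $K\ge\phi$ together with Jensen's inequality for the convex $\phi$ with respect to the probability measure $dx/c$ on $\{z>1\}$. This gives $\int_{\{z>1\}}K\ge c\,\phi(m)$, where $m$ is the mean of $z$ on $\{z>1\}$, so $m\ge1$.
\item[(3)] Ideally $Hf(t)$ would equal $\delta(a+cm)$. This holds exactly when no layer has $\delta z>1$, i.e. when $\lambda_f\le t$ for all levels.
\item[(4)] In the same situation, the crossing inequalities $f(s^+)\le Hf(t)\le f(s^-)$ become the constraints $c\le \delta(a+cm)\le a+c$. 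Here $c=|\{\lambda_f>s\}|$ is essentially $f(s^+)$ after normalization, and $a+c\ge Hf(t)$ because $Hf(s)\ge Hf(t)$.
\end{itemize}
Lemma~\ref{lemma: scalar_lem} then yields $a+c\phi(m)\ge 4\delta(a+c-\delta(a+cm))$, which is exactly the required bound.

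\emph{Main obstacle.} The hard step is justifying (3), namely handling the layers with $\lambda_f>t$. For those layers $\min\{1,\delta z\}=1<\delta z$, so $Hf(t)<\delta(a+cm)$ and the inequality goes the wrong way. I would try to choose the normalizing constant so that these layers disappear, i.e. so that $f(t^+)=0$ and hence $\lambda_f\le t$ at every positive level. The difficulty is that $f$ beyond $t$ then becomes negative, while it still affects $H^2f(\gamma_1 s)$ when $\gamma_1 s>t$.

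To handle the negative part, I would use the decomposition $f=f_+-f_-$. Here $f_-$ is nonincreasing in the reversed sense, i.e. it is supported on $(t,\infty)$ and nondecreasing there. I would then check directly that such an $f_-$ contributes nonnegatively to $H^2f(\gamma_0 s)-H^2f(\gamma_1 s)$. Its contribution to $H^2 f(\gamma_0 s)-H^2 f(\gamma_1 s)$ is $H^2f_-(\gamma_1 s)-H^2f_-(\gamma_0 s)$, which is $\ge0$ because $H^2$ of a nonnegative nondecreasing function is nondecreasing. If this works, dropping $f_-$ only decreases the right-hand side, and the argument above applies to $f_+$.

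\emph{Sharpness.} For optimality I take $f=\chi_{(0,1)}$ with $s=1$ and $t=2$, so that $\delta=1/2$, $a=0$, $c=1$, $m=1$. Then $K(1)=1$ and $4\delta(1-\delta)=1$, so every inequality in the chain is an equality. Equivalently, this matches the extremal computation in Lemma~\ref{lemma: f_chi_0,1}.
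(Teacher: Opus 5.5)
Your proof is correct and follows the paper's architecture: layer-cake, Lemma~\ref{prop: properties_P}, the split at $\lambda_f=s$ with $K(z)\ge z$ below and Jensen for $\phi$ above, then Lemma~\ref{lemma: scalar_lem}. The only real difference is how you dispose of the layers with $\lambda_f>t$, and the device you were unsure about does work. After normalizing so that $f(t^+)=0$, the part $f_-$ vanishes on $(0,t]$. Hence $Hf(s)$, $Hf(t)$ and the crossing condition are the same for $f$ and $f_+$; when $s=t$, use $f_+(t^+)=0=f(t^+)$. Since $H^2f_-$ is nondecreasing, discarding $f_-$ only lowers $H^2f(\gamma_0s)-H^2f(\gamma_1s)$.

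The paper instead keeps the set $\{\lambda_f>t\}$ as a third piece. On it $\min\{1,\lambda_f/s\}=\min\{1,\lambda_f/t\}=1$, so it cancels from $Hf(s)-Hf(t)$. Its measure appears in both $f(s^+)$ and $Hf(t)$, so it also cancels from the constraint $c\le\delta(a+cm)$. It is then simply dropped from $\int K$ because $K\ge0$. The two devices are the same inequality. For nonnegative $f$, those layers add up to $\min\{f,f(t^+)\}=f(t^+)-(f(t^+)-f)_+$, which is a constant minus your $f_-$. The paper's version just avoids the sign decomposition.

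Your derivation of $\delta(a+cm)\le a+c$ from $Hf(t)\le Hf(s)$ is valid, and slightly simpler than the paper's route through $f(s^-)$.

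One slip in the sharpness check: with your split, $z\equiv1$ lies in $\{z\le1\}$, so $a=1$, $c=0$ rather than $a=0$, $c=1$. Equality still holds, since $K(1)=1=4\delta(1-\delta)$ at $\delta=1/2$.
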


\begin{proof}
    First, suppose that $f$ is nonnegative, nonincreasing and locally integrable, and choose a crossing point $s$ of $f$ on the interval $(0,t)$. By Lemma~\ref{prop: properties_P} it suffices to see that
    \begin{equation}
        \dfrac{4s}{t}\bigg( Hf(s) - Hf(t)\bigg) \leq \int_0^{\infty} K\bigg( \dfrac{\lambda_f(x)}{s} \bigg) \, dx.
    \end{equation}
    For the sake of clarity consider the sets
    \begin{equation}
        E_0 = \{x > 0: \lambda_f(x) \leq s \}, \quad E_1 = \{x > 0: s < \lambda_f(x) \leq t \},
    \end{equation}
    and define
    \begin{equation}
        a=\int_{E_0}\frac{\lambda_f(x)}s\,dx, \qquad c=|E_1|.
    \end{equation}
    If $c = 0$, since $s \leq t$ and $\sup_{0 \leq\delta \leq 1} 4\delta(1-\delta) = 1$, we have by \eqref{ec: layer_H_rem} and Lemma~\ref{lemma: prop_K},
    \begin{equation}
        \begin{aligned}
            \int_0^{\infty} K\bigg( \dfrac{\lambda_f(x)}{s} \bigg) \, dx & \geq \int_{E_0} K\bigg( \dfrac{\lambda_f(x)}{s} \bigg) \, dx \\
            & \geq \dfrac{1}{s}\int_{E_0} \lambda_f(x) \, dx \\
            & \geq 4\dfrac{s}{t} \bigg( 1- \dfrac{s}{t} \bigg)\dfrac{1}{s}\int_{E_0} \lambda_f(x) \, dx \\
            & = \dfrac{4s}{t}\bigg(\dfrac{1}{s}\int_{E_0} \lambda_f(x) \, dx - \dfrac{1}{t}\int_{E_0} \lambda_f(x) \, dx \bigg) \\
            & = \dfrac{4s}{t}\bigg(\dfrac{1}{s}\int_{E_0} \lambda_f(x) \, dx - \dfrac{1}{t}\int_{E_0 \cup E_1} \lambda_f(x) \, dx \bigg) \\
            & = \dfrac{4s}{t}\bigg(\dfrac{1}{s}\int_{\{x > 0: \lambda_f(x) \leq s \}} \lambda_f(x) \, dx - \dfrac{1}{t}\int_{\{x > 0: \lambda_f(x) \leq t \}} \lambda_f(x) \, dx \bigg) \\
            & = \dfrac{4s}{t}\bigg( Hf(s) - Hf(t)\bigg).
        \end{aligned}
    \end{equation}
    If \(c>0\), set
    \begin{equation}
        m=\frac1c\int_{E_1}\frac{\lambda_f(x)}s\,dx.
    \end{equation} 
    By Lemma~\ref{lemma: prop_K} and Jensen's inequality,
    \begin{equation}{\label{ec: Jensen_ineq}}
        \begin{aligned}
            \int_0^{\infty} K\bigg( \dfrac{\lambda_f(x)}{s} \bigg) \, dx & \geq \int_{E_0} K\bigg( \dfrac{\lambda_f(x)}{s} \bigg) \, dx + \int_{E_1} K\bigg( \dfrac{\lambda_f(x)}{s} \bigg) \, dx \\
            & \geq \dfrac{1}{s}\int_{E_0} \lambda_f(x) \, dx + \int_{E_1} \phi\bigg( \dfrac{\lambda_f(x)}{s} \bigg) \, dx \\
            & \geq \dfrac{1}{s}\int_{E_0} \lambda_f(x) \, dx + c \dfrac{1}{|E_1|}\int_{E_1} \phi\bigg( \dfrac{\lambda_f(x)}{s} \bigg) \, dx \\
            & \geq \dfrac{1}{s}\int_{E_0} \lambda_f(x) \, dx + c \phi \Bigg(\dfrac{1}{|E_1|}\int_{E_1} \dfrac{\lambda_f(x)}{s} \, dx \Bigg) \\
            & = a + c\phi(m).
        \end{aligned}
    \end{equation}
    Similarly, if we consider $\delta = s/t \leq 1$, by \eqref{ec: layer_H_rem} we have
    \begin{equation}\label{eq:hardy-difference}
       \begin{aligned}
           Hf(s)-Hf(t) & = \dfrac{1}{s} \int_{E_0} \lambda_f(x) \, dx - \dfrac{1}{t} \int_{E_0\cup E_1} \lambda_f(x) \, dx  + c \\
           & = \dfrac{1}{s} \int_{E_0} \lambda_f(x) \, dx - \delta \bigg( \dfrac{1}{s}\int_{E_0} \lambda_f(x) \, dx + \dfrac{1}{s}\int_{E_1} \lambda_f(x) \, dx \bigg)  + c \\
           & = a+c-\delta(a+cm).
       \end{aligned}
    \end{equation}
    In particular, we have
    \begin{equation}\label{eq:scalar-crossing}
       \begin{aligned}
             Hf(t)  & = \dfrac{1}{t} \int_{E_0 \cup E_1} \lambda_f(x) \, dx + |\{x > 0: \lambda_f(x) > t \}| \\
            & = \delta \bigg( \dfrac{1}{s}\int_{E_0} \lambda_f(x) \, dx + \dfrac{1}{s}\int_{E_1} \lambda_f(x) \, dx \bigg) + |\{x > 0: \lambda_f(x) > t \}| \\
            & = \delta(a+cm) + |\{x > 0: \lambda_f(x) > t \}|,
       \end{aligned}
    \end{equation}
    On the other hand, the generalized inverse relations for a nonnegative nonincreasing function are 
    \begin{equation}
        f(s^+) = |\{ x> 0: \lambda_f(x) > s \}| = c + |\{x > 0: \lambda_f(x) > t \}|,
    \end{equation}
    and
    \begin{equation}
        \begin{aligned}
            f(s^-) = |\{ x> 0: \lambda_f(x) \geq s \}| & = c + |\{x > 0: \lambda_f(x) > t \}| + |\{x > 0: \lambda_f(x) = s \}| \\
            & \leq c + |\{x > 0: \lambda_f(x) > t \}| + \int_{\{x > 0: \lambda_f(x) = s \}} \dfrac{\lambda_f(x)}{s} \, dx \\
            & \leq c + |\{x > 0: \lambda_f(x) > t \}| + a.
        \end{aligned}
    \end{equation}
    Thus, since $s$ is a crossing point of $f$ on $(0,t)$, it follows that $c \leq \delta(a+cm) \leq c+a$. Hence, we conclude by Lemma~\ref{lemma: scalar_lem} together with \eqref{ec: Jensen_ineq} and \eqref{eq:hardy-difference} that
    \begin{equation}
        \begin{aligned}
            \int_0^{\infty} K\bigg( \dfrac{\lambda_f(x)}{s} \bigg) \, dx \geq a + c\phi(m) \geq 4\delta\bigl(a+c-\delta(a+cm)\bigr) = \dfrac{4s}{t}\bigg( Hf(s) - Hf(t)\bigg).
        \end{aligned}
    \end{equation}
    This completes the proof when $f$ is nonnegative, nonincreasing and locally integrable. Now, suppose that $f$ is nonincreasing and locally integrable on $\Rplus$. Let $R > \max\{\gamma_1 s, t\}$ and $f_R = (f-f(R))_+$. For every $0 < z \leq R$, $Hf(z) = Hf_R(z) + f(R)$. Since $s$ is also a crossing point of $f_R$ on $(0,t)$ and $f_R$ is a nonnegative, nonincreasing and locally integrable function on $\Rplus$, we conclude by the previous case that
    \begin{equation}
        \begin{aligned}
            \dfrac{4\log \gamma_1}{\gamma_1}\bigg( Hf(s) - Hf(t)\bigg) & = \dfrac{4\log \gamma_1}{\gamma_1}\bigg( Hf_R(s) - Hf_R(t)\bigg) \\
            & \leq \dfrac{t}{s} \bigg( H^2f_R(\gamma_0 s) - H^2f_R(\gamma_1 s)\bigg) \\
            & = \dfrac{t}{s} \bigg( H^2f(\gamma_0 s) - H^2f(\gamma_1 s)\bigg).
        \end{aligned}
    \end{equation}
    It remains to show the optimality of the constant. Take $f = \chi_{(0,1)}$ and $t = 2$. Then
    \begin{equation}
       H\chi_{(0,1)}(x)= \begin{cases}
          1, & 0 < x \leq 1,\\[2mm]
          \displaystyle
          \frac{1}{x}, & x > 1,\\[3mm] 
        \end{cases}, \quad H^2\chi_{(0,1)}(x)= \begin{cases}
          1, & 0 < x \leq 1,\\[2mm]
          \displaystyle
          \frac{1+\log x}{x}, & x > 1.\\[3mm] 
        \end{cases}
    \end{equation}
    Clearly the crossing point of $f$ on $(0,t)$ is $s = 1$. Therefore
    \begin{equation}
        \begin{aligned}
            \dfrac{t}{s} \bigg( H^2f(\gamma_0 s) - H^2f(\gamma_1 s)\bigg) & = 2\bigg( H^2\chi_{(0,1)}(\gamma_0) - H^2\chi_{(0,1)}(\gamma_1)\bigg) \\
            & = 2 \bigg(\frac{1+\log \gamma_0}{\gamma_0} - \frac{1+\log \gamma_1}{\gamma_1}  \bigg) \\
            & =  2 \bigg(\frac{1+2\log \gamma_1}{\gamma_1} - \frac{1+\log \gamma_1}{\gamma_1}  \bigg) \\
            & = 2 \dfrac{\log \gamma_1}{\gamma_1},
        \end{aligned}
    \end{equation}
    and
    \begin{equation}
        \begin{aligned}
            Hf(s) - Hf(t) = H\chi_{(0,1)}(1) - H\chi_{(0,1)}(2) = \dfrac{1}{2}.
        \end{aligned}
    \end{equation}
    Thus the constant $(4\log \gamma_1)/\gamma_1$ is optimal.
\end{proof}

\begin{theorem}{\label{thm: main_result}}
    Suppose that $f$ is a nonincreasing locally integrable function in $\Rplus$. Then
    \begin{equation}
        \Vert Hf \Vert_{\BMO} \geq \alpha_0 \Vert f \Vert_{\BMO}.
    \end{equation}
    Moreover, the constant $\alpha_0$ is optimal.
\end{theorem}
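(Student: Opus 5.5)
We combine Theorem~\ref{thm: ineq_main_in} with Lemma~\ref{lemma: Omega_nonincreasing}, applied to both $f$ and $Hf$. Fix $t>0$ and a crossing point $s$ of $f$ on $(0,t)$. By Lemma~\ref{lemma: Omega_nonincreasing},
\begin{equation}
    \Omega(f;(0,t)) = \frac{2s}{t}\bigl(Hf(s)-Hf(t)\bigr).
\end{equation}
Multiplying the inequality of Theorem~\ref{thm: ineq_main_in} by $2s/t$ then gives
\begin{equation}
    \frac{4\log\gamma_1}{\gamma_1}\,\Omega(f;(0,t)) \leq 2\bigl(H^2f(\gamma_0 s)-H^2f(\gamma_1 s)\bigr).
\end{equation}
It therefore remains to bound the right-hand side by a multiple of $\Omega(Hf;(0,\gamma_1 s))$.

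The function $Hf$ is nonincreasing and locally integrable, and $H(Hf)(\gamma_1 s)=H^2f(\gamma_1 s)$ is its mean on $(0,\gamma_1 s)$. The key observation is the following. For any nonincreasing $g$, any $T>0$ and any $0<u\leq T$, we have
\begin{equation}
    \Omega(g;(0,T)) \geq \frac{2u}{T}\bigl(Hg(u)-Hg(T)\bigr).
\end{equation}
To see this, note that
\begin{equation}
    \Omega(g;(0,T)) = \frac{1}{T}\int_0^T |g-Hg(T)| \geq \frac{2}{T}\int_0^u \bigl(g-Hg(T)\bigr).
\end{equation}
This holds because $\int_0^T (g-Hg(T))=0$, so the $L^1$ norm equals $2\sup_u \int_0^u (g-Hg(T))$. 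Equality occurs at a crossing point, which is Lemma~\ref{lemma: Omega_nonincreasing}.

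Apply this with $g=Hf$, $T=\gamma_1 s$ and $u=\gamma_0 s\leq T$ (recall $\gamma_0<\gamma_1$). This yields
\begin{equation}
    \Omega(Hf;(0,\gamma_1 s)) \geq \frac{2\gamma_0}{\gamma_1}\bigl(H^2f(\gamma_0 s)-H^2f(\gamma_1 s)\bigr).
\end{equation}
Combining the two displays gives
\begin{equation}
    \frac{4\log\gamma_1}{\gamma_1}\,\Omega(f;(0,t)) \leq \frac{\gamma_1}{\gamma_0}\,\Omega(Hf;(0,\gamma_1 s)) \leq \frac{\gamma_1}{\gamma_0}\,\|Hf\|_{\BMO}.
\end{equation}
Since $\gamma_1/\gamma_0=1+\log\gamma_1$, this reads
\begin{equation}
    \alpha_0\,\Omega(f;(0,t)) \leq \|Hf\|_{\BMO}.
\end{equation}
Taking the supremum over $t$ and using \cite[Lemma~1]{Korenovskii2002} gives the inequality. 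No positivity or integrability issue arises: Theorem~\ref{thm: ineq_main_in} already covers general nonincreasing $f$, and if $\|Hf\|_{\BMO}=\infty$ there is nothing to prove.

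For optimality we use Lemma~\ref{lemma: f_chi_0,1}. For $f=\chi_{(0,1)}$,
\begin{equation}
    \|Hf\|_{\BMO}=\frac{\alpha_0}{2}=\alpha_0\|f\|_{\BMO},
\end{equation}
so no constant larger than $\alpha_0$ can work; equivalently $A\leq\alpha_0$. The only delicate points are the elementary inequality $\Omega(g;(0,T))\geq \frac{2u}{T}(Hg(u)-Hg(T))$ and checking that $\gamma_0 s\leq\gamma_1 s$. Both are routine, so the real work has already been done in Theorem~\ref{thm: ineq_main_in}.
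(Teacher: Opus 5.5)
Your proof is correct and follows the paper's argument essentially step for step. You bound $\Omega(Hf;(0,\gamma_1 s))$ from below by $\frac{2\gamma_0}{\gamma_1}\bigl(H^2f(\gamma_0 s)-H^2f(\gamma_1 s)\bigr)$, using that the oscillation equals twice the integral of the positive part and truncating to $(0,\gamma_0 s)$; you then combine this with Theorem~\ref{thm: ineq_main_in} and Lemma~\ref{lemma: Omega_nonincreasing}, and take optimality from Lemma~\ref{lemma: f_chi_0,1}. The only difference is cosmetic: you isolate the bound $\Omega(g;(0,T))\geq \frac{2u}{T}\bigl(Hg(u)-Hg(T)\bigr)$ as a separate observation, which in fact holds for any locally integrable $g$, whereas the paper writes it inline.
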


\begin{proof}
    Let $f$ be a nonincreasing locally integrable function in $\Rplus$ and $t > 0$. Consider the crossing point $s$ of $f$ on $(0,t)$. Since $\int_0^{\gamma_1s} (Hf(x) - H^2f(\gamma_1s)) \, dx = 0$,
    \begin{equation}
        \begin{aligned}
            \Omega(Hf;(0, \gamma_1s)) & = \dfrac{1}{\gamma_1s} \int_0^{\gamma_1s} |Hf(x) - H^2f(\gamma_1s)| \, dx \\
            & = \dfrac{1}{\gamma_1s} \bigg(\int_0^{\gamma_1s} \big[Hf(x) - H^2f(\gamma_1s)\big]_+ \, dx + \int_0^{\gamma_1s} \big[Hf(x) - H^2f(\gamma_1s)\big]_- \, dx \bigg) \\
            & = \dfrac{2}{\gamma_1s} \int_0^{\gamma_1s} \big[Hf(x) - H^2f(\gamma_1s)\big]_+ \, dx \\
            & \geq \dfrac{2}{\gamma_1s} \int_0^{\gamma_0s} \big[Hf(x) - H^2f(\gamma_1s)\big]_+ \, dx \\
            & \geq \dfrac{2}{\gamma_1s} \int_0^{\gamma_0s} \Big(Hf(x) - H^2f(\gamma_1s)\Big) \, dx \\
            & \geq \dfrac{2}{\gamma_1s} \bigg(\gamma_0s H^2f(\gamma_0s) - \gamma_0s H^2f(\gamma_1s) \bigg) \\
            & \geq \dfrac{2\gamma_0}{\gamma_1} \bigg( H^2f(\gamma_0s) - H^2f(\gamma_1s) \bigg).
        \end{aligned}
    \end{equation}
    Therefore, by Lemma~\ref{lemma: Omega_nonincreasing} and Theorem~\ref{thm: ineq_main_in},
    \begin{equation}
        \begin{aligned}
            \Vert Hf \Vert_{\BMO} & \geq \Omega(Hf;(0, \gamma_1s)) \\
            & \geq \dfrac{2\gamma_0}{\gamma_1} \bigg( H^2f(\gamma_0s) - H^2f(\gamma_1s) \bigg) \\
            & \geq \dfrac{4\gamma_0\log \gamma_1}{\gamma_1^2} \dfrac{2s}{t}\bigg( Hf(s) - Hf(t)\bigg) \\
            & = \alpha_0 \Omega(f; (0,t)).
        \end{aligned}
    \end{equation}
    Taking the supremum over all possible $t > 0$ gives the inequality $\Vert Hf \Vert_{\BMO} \geq \alpha_0 \Vert f \Vert_{\BMO}$. The optimality follows by taking $f = \chi_{(0,1)}$ and applying Lemma~\ref{lemma: f_chi_0,1}. 
\end{proof}

\begin{corollary}
    Suppose that $f$ is a nonincreasing locally integrable function in $\Rplus$. Then
    \begin{equation}
       \alpha_0 \Vert f \Vert_{\BMO} \leq \Vert Hf \Vert_{\BMO}\leq \Vert f \Vert_{\BMO}.
    \end{equation}
    Moreover, the constants $\alpha_0$ and $1$ are optimal.
\end{corollary}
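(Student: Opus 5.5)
The lower bound $\alpha_0\Vert f\Vert_{\BMO}\le\Vert Hf\Vert_{\BMO}$, together with the optimality of $\alpha_0$, is exactly Theorem~\ref{thm: main_result}, so I would simply invoke it. What remains is the upper bound $\Vert Hf\Vert_{\BMO}\le\Vert f\Vert_{\BMO}$ and the optimality of the constant $1$.

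For the upper bound I would use that $Hf$ is again nonincreasing: it is an average of $f$ over shrinking-relative initial intervals, $Hf(x)=\int_0^1 f(xu)\,du$. By the cited Lemma~1 of Korenovskii, it therefore suffices to control $\Omega(Hf;(0,T))$ for every $T>0$.

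Writing $Hf(x)=\int_0^1 f(xu)\,du$, we get $(Hf)_{(0,T)}=\int_0^1 f_{(0,Tu)}\,du$, since $\frac1T\int_0^T f(xu)\,dx=f_{(0,Tu)}$. Minkowski's inequality would give
\begin{equation}
\Omega(Hf;(0,T))\le\int_0^1\frac1T\int_0^T|f(xu)-f_{(0,Tu)}|\,dx\,du=\int_0^1\Omega(f;(0,Tu))\,du\le\Vert f\Vert_{\BMO}.
\end{equation}
This is the main step. The subtlety is only that the subtracted constant $(Hf)_{(0,T)}$ equals $\int_0^1 f_{(0,Tu)}\,du$, so that the difference $Hf(x)-(Hf)_{(0,T)}$ splits as an integral over $u$ of the differences $f(xu)-f_{(0,Tu)}$. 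The change of variables then turns each inner integral into the oscillation of $f$ on $(0,Tu)$. Taking the supremum over $T$ yields $\Vert Hf\Vert_{\BMO}\le\Vert f\Vert_{\BMO}$.

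For optimality of $1$, I would pick a nonincreasing $f$ for which the ratio $\Vert Hf\Vert_{\BMO}/\Vert f\Vert_{\BMO}$ tends to $1$. The natural choice is $f(x)=-\log x$, for which $Hf=f+1$. Then $\Vert Hf\Vert_{\BMO}=\Vert f\Vert_{\BMO}$, which is finite and nonzero: by dilation invariance $\Omega(f;(0,t))=\Omega(f;(0,1))=2/e$. Hence the constant $1$ is attained and cannot be improved. The optimality of $\alpha_0$ comes from $\chi_{(0,1)}$, as in Lemma~\ref{lemma: f_chi_0,1}.
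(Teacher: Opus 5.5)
Your proof is correct. For the lower bound, the optimality of $\alpha_0$, and the sharpness example $f(x)=\log(1/x)$ with $Hf=f+1$, it coincides with the paper's.

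The difference is in the upper bound $\Vert Hf\Vert_{\BMO}\le\Vert f\Vert_{\BMO}$. The paper simply cites Xiao, whereas you prove it by writing $H$ as an average of dilations, $Hf(x)=\int_0^1 f(xu)\,du$, and combining Minkowski's inequality with the dilation invariance of mean oscillation. This is the standard argument that averages of dilations are contractions on $\BMO$, and it makes the corollary self-contained.

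Two remarks on your argument.

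\begin{itemize}
\item The detour through Korenovskii's Lemma~1 is unnecessary. The same computation for an arbitrary interval $I=(A,B)$, with $uI=(Au,Bu)$, gives $\Omega(Hf;I)\le\int_0^1\Omega(f;uI)\,du$ directly. So the upper bound holds on every interval and does not use monotonicity at all.

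\item The identity $(Hf)_{(0,T)}=\int_0^1 f_{(0,Tu)}\,du$, and the splitting that follows, presuppose $Hf\in L^1(0,T)$. This is not automatic for a merely locally integrable nonincreasing $f$: take $f(x)=1/(x\log^2 x)$ near $0$, for which $Hf(x)=1/(x|\log x|)$. It does hold whenever $\Vert f\Vert_{\BMO}<\infty$, e.g.\ by the John--Nirenberg inequality on $(0,T)$ followed by Hardy's $L^2$ inequality. If $\Vert f\Vert_{\BMO}=\infty$, the upper bound is vacuous.
\end{itemize}

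Your check that $\Omega(\log(1/x);(0,t))=2/e$ for all $t$, so that the extremal example has finite nonzero seminorm, fills in a detail the paper leaves implicit.
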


\begin{proof}
    This follows from Theorem~\ref{thm: main_result} and \cite{Xiao2000}. To see that the constant $1$ cannot be decreased, it suffices to consider the function $f(x) = \ln(1/x)$, $x > 0$, since $Hf(x) = f(x) + 1$.
\end{proof}

\section{Open problems}

We conclude with two natural extensions of Theorem \ref{thm: main_result}.

\subsection{The discrete setting}

For a real sequence \(a=(a_n)_{n\geq1}\) and a finite interval
\(J=\{m,m+1,\ldots,n\}\subset\mathbb N\), set
\begin{equation*}
    a_J=\frac1{|J|}\sum_{k\in J}a_k,
    \qquad
    \Omega_{\mathrm d}(a;J)
    =\frac1{|J|}\sum_{k\in J}|a_k-a_J|.
\end{equation*}
The discrete BMO space \(\BMO_{\mathrm d}(\mathbb N)\) consists of
the sequences for which
\begin{equation*}
    \Vert a\Vert_{\BMO_{\mathrm d}}
    =\sup_{J\subset\mathbb N}\Omega_{\mathrm d}(a;J)<\infty,
\end{equation*}
where the supremum is taken over all finite intervals of consecutive
integers.  The discrete Hardy, or Ces\`aro, operator is defined by
\begin{equation}
    (Ca)_n=\frac1n\sum_{k=1}^n a_k,
    \qquad n\geq1.
\end{equation}

Given a nonincreasing sequence, we associate with it a nonincreasing step function on $\Rplus$. This construction relates the discrete mean oscillation of the sequence to the $\BMO$ seminorm of the step function and allows us to apply the continuous theorem. The comparison involves a loss in the constants and gives the following preliminary estimate.

\begin{proposition}
    Let $C$ be the Cesàro operator and let $a$ be a nonnegative nonincreasing sequence. Then
    \begin{equation}{\label{ec: discr_ineq}}
        \Vert Ca \Vert_{\mathrm{BMO}_{\mathrm d}}
        \geq \frac{\alpha_0}{8} \Vert a \Vert_{\mathrm{BMO}_{\mathrm d}}.
    \end{equation}
\end{proposition}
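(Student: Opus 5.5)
We transfer the problem to the continuous setting through the step function $f=\sum_{k\ge1}a_k\chi_{(k-1,k]}$, which is nonnegative, nonincreasing and locally integrable on $\Rplus$. We then apply Theorem~\ref{thm: main_result} to $f$ and absorb the discretization losses into the factor $1/8$. Two comparisons are needed:
\begin{equation}
\Vert a\Vert_{\BMO_{\mathrm d}}\le C_1\Vert f\Vert_{\BMO},\qquad \Vert Hf\Vert_{\BMO}\le C_2\Vert Ca\Vert_{\BMO_{\mathrm d}},
\end{equation}
and we aim for $C_1C_2\le 8$.

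\emph{First comparison.} For a discrete interval $J=\{m,\dots,n\}$, the interval $I=(m-1,n]$ satisfies $f_I=a_J$ and $\Omega(f;I)=\Omega_{\mathrm d}(a;J)$ exactly, since $f$ is constant on each unit cell. This gives $C_1=1$.

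\emph{Second comparison.} For integer $n$ we have $Hf(n)=(Ca)_n$. On $(n-1,n]$,
\begin{equation}
Hf(x)=\frac{(n-1)(Ca)_{n-1}+(x-n+1)a_n}{x},
\end{equation}
which lies between $(Ca)_n$ and $(Ca)_{n-1}$. Indeed, it is a weighted average of $(Ca)_{n-1}$ and $a_n\le (Ca)_{n-1}$ with weight on $a_n$ increasing in $x$, so $Hf$ is monotone on each cell. On $(0,1]$ it is constantly $a_1=(Ca)_1$.

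By Korenovskii's Lemma~1 (Hf is nonincreasing), it suffices to bound $\Omega(Hf;(0,t))$. We use the standard estimate
\begin{equation}
\Omega(g;I)\le 2\inf_c\frac1{|I|}\int_I|g-c|.
\end{equation}
First reduce to integer $t$. For $t\le 1$ the oscillation is zero. For $t>1$, replace $(0,t)$ by $(0,\lceil t\rceil)$, whose length is at most $2t$, which costs a factor $2$:
\begin{equation}
\Omega(Hf;(0,t))\le \frac{2}{t}\int_0^t|Hf-c|\le\frac{4}{N}\int_0^N|Hf-c|,\qquad N=\lceil t\rceil.
\end{equation}
Now take $c=(Ca)_J$ with $J=\{1,\dots,N\}$. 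On cell $k$, $|Hf(x)-c|\le\max(|(Ca)_{k-1}-c|,|(Ca)_k-c|)$, and this is bounded by the sum of the two terms. Summing over cells counts each term at most twice, so
\begin{equation}
\frac1N\int_0^N|Hf-c|\le \frac2N\sum_{k=1}^N|(Ca)_k-c|=2\,\Omega_{\mathrm d}(Ca;J).
\end{equation}
Combining the two steps gives $C_2\le 8$, and hence
\begin{equation}
\alpha_0\Vert a\Vert_{\BMO_{\mathrm d}}\le\alpha_0\Vert f\Vert_{\BMO}\le\Vert Hf\Vert_{\BMO}\le 8\Vert Ca\Vert_{\BMO_{\mathrm d}}.
\end{equation}

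The main obstacle is the constant bookkeeping in the second comparison. If the count above overshoots, a sharper approach is available: since $Hf$ is monotone, use $\Omega(Hf;(0,t))=\frac{2s}{t}(H^2f(s)-H^2f(t))$ from Lemma~\ref{lemma: Omega_nonincreasing}, applied with $Hf$ in place of $f$. Then compare $H^2f$ at integer points with the corresponding discrete averages of $Ca$, rounding $s$ and $t$ to integers. In any case the factor $8$ leaves room, and the key structural inputs are only the cellwise monotonicity of $Hf$ and $Hf(n)=(Ca)_n$.
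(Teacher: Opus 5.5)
Your argument is correct and follows the paper's route. It uses the same step function $f$ with $\Vert a\Vert_{\BMO_{\mathrm d}}\le\Vert f\Vert_{\BMO}$, Korenovskii's reduction to intervals $(0,t)$, the factor-$2$ replacement of the mean by an arbitrary constant, the enlargement of $(0,t)$ to $(0,\lceil t\rceil)$, and then Theorem~\ref{thm: main_result}. The only difference is bookkeeping: the paper passes through the step function $B=(Ca)_n$ on $(n-1,n]$ and gets $8=4+4$ from $\Vert B\Vert_{\BMO}\le4\Vert Ca\Vert_{\BMO_{\mathrm d}}$ and $\Vert Hf-B\Vert_\infty\le2\Vert Ca\Vert_{\BMO_{\mathrm d}}$, whereas your cellwise sandwich and double count give $8=2\cdot2\cdot2$ directly on $Hf$, so the fallback in your last paragraph is not needed.
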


\begin{proof}
    Define
    \begin{equation}
        f(t)=a_n,\qquad B(t)=(Ca)_n,\qquad n-1<t\leq n.
    \end{equation}
    Since intervals with integer endpoints reproduce the discrete mean oscillations,
    \begin{equation*}
        \Vert f\Vert_{\mathrm{BMO}(\mathbb R_+)}
        \geq \Vert a \Vert_{\mathrm{BMO}_{\mathrm d}}.
    \end{equation*}
    We claim that $\Vert B\Vert _{\mathrm{BMO}(\mathbb R_+)}\leq 4\Vert Ca\Vert_{\mathrm{BMO}_{\mathrm d}}$. Let $t > 0$ arbitrary. If $t \leq 1$, then $B$ is constant on $(0,t)$, so $\Omega(B; (0,t)) = 0$. For $t > 1$, let $n \geq 2$ be the smallest natural number such that $t \leq n$. Using
    \begin{equation}
        \dfrac{1}{t}\int_0^t |B(x) - HB(t)| \, dx \leq \dfrac{2}{t}\int_0^t |B(x) - HB(n)| \, dx,
    \end{equation}
    it follows that
    \begin{equation}
        \begin{aligned}
            \Omega(B; (0,t)) & \leq \dfrac{2}{t}\int_0^t |B(x) - HB(n)| \, dx \\
            & \leq \dfrac{2}{t}\int_0^n |B(x) - HB(n)| \, dx \\
            & = \dfrac{2n}{t} \Omega(B; (0,n)) \\
            & = \dfrac{2n}{t} \frac{1}{n}\sum_{k=1}^n \left| (Ca)_k-\frac{1}{n} \sum_{j=1}^n (Ca)_j \right| \\
            & \leq \dfrac{2n}{t} \Vert Ca\Vert_{\mathrm{BMO}_{\mathrm d}} \leq 4 \Vert Ca\Vert_{\mathrm{BMO}_{\mathrm d}}.
        \end{aligned}
    \end{equation}
    Taking the supremum over all possible $t > 0$ yields that $\Vert B\Vert _{\mathrm{BMO}(\mathbb R_+)}\leq 4\Vert Ca\Vert_{\mathrm{BMO}_{\mathrm d}}$. 
    On the other hand, clearly $Hf$ is nonincreasing and $Hf(n) = (Ca)_n$ for every $n \geq 1$. Thus, for $n\geq2$ and $n-1<t\leq n$,
    \begin{equation*}
        (Ca)_{n-1} \geq Hf(t)\geq (Ca)_n.
    \end{equation*}
    Since the mean oscillation of $Ca$ on $\{n-1,n\}$ is $|(Ca)_{n-1}-(Ca)_{n}|/2\leq \Vert Ca\Vert_{\mathrm{BMO}_{\mathrm d}}$, it follows that $\Vert Hf-B\Vert_\infty \leq 2\Vert Ca\Vert_{\mathrm{BMO}_{\mathrm d}}$. Hence
    \begin{equation}
        \Vert Hf\Vert_{\mathrm{BMO}}
        \leq \Vert B\Vert_{\mathrm{BMO}}+2\Vert Hf-B\Vert_\infty
        \leq 8\Vert Ca\Vert_{\mathrm{BMO}_{\mathrm d}}.
    \end{equation}
    Therefore, the result follows from Theorem~\ref{thm: main_result}, since
    \begin{equation}
        \alpha_0\Vert a\Vert_{\mathrm{BMO}_{\mathrm d}}
        \leq \alpha_0\Vert f\Vert_{\mathrm{BMO}}
        \leq \Vert Hf\Vert_{\mathrm{BMO}}
        \leq 8\Vert Ca\Vert_{\mathrm{BMO}_{\mathrm d}}.
    \end{equation}
\end{proof}

To formulate the sharp problem, we consider the discrete one-jump sequences, which are the natural analogues of the functions $\chi_{(0,a)}$. Let us denote for every $n \in \N$,
\begin{equation}
    e_n(k) = \begin{cases}
            1, & 1 \leq k \leq n,\\[1mm]
            0, & k > n,
            \end{cases} 
    \qquad \alpha_{d} = \inf_{n \in \N} \dfrac{\Vert Ce_n\Vert_{\BMO_{\mathrm d}}}{\Vert e_n \Vert_{\BMO_{\mathrm d}}}.
\end{equation}
Testing the inequality on the sequences $e_n$ shows that no lower constant larger than $\alpha_{\mathrm d}$ is possible. We conjecture that these one-jump sequences already determine the optimal constant for every nonincreasing sequence, i.e.

\begin{conjecture}%[Discrete reverse Hardy inequality] 
    For every nonincreasing sequence $a = (a_n)_{n\geq 1}$,
    \begin{equation}
        \Vert Ca\Vert_{\BMO_{\mathrm d}}
        \geq \alpha_{d}\Vert a \Vert_{\BMO_{\mathrm d}}.
    \end{equation}
\end{conjecture}

\subsection{The spaces \texorpdfstring{\(\BMO_p\)}{BMO-p}}

Let $1<p<\infty$.  For $f\in L^p_{\mathrm{loc}}(\Rplus)$, define
\begin{equation}
    \Omega_p(f;I)
    =\left(\frac1{|I|}\int_I|f(x)-f_I|^p\,dx\right)^{1/p},
\end{equation}
and
\begin{equation*}
    \Vert f \Vert_{\BMO_p} =\sup_{I\subset\Rplus}\Omega_p(f;I).
\end{equation*}
The space $\BMO_p(\Rplus)$ consists of all locally $p$-integrable
functions with finite $\BMO_p$ seminorm.  The John-Nirenberg
inequality \cite{JohnNirenberg} and Theorem \ref{thm: main_result} give a reverse estimate with a non-sharp constant depending on $p$. This argument, however, does not yield the optimal constant. The problem is therefore to identify the best possible one. Set
\begin{equation}
    \alpha_p =\frac{\|H\chi_{(0,1)}\|_{\BMO_p}}{\|\chi_{(0,1)}\|_{\BMO_p}}.
\end{equation}

\begin{conjecture}%[Sharp reverse Hardy inequality in \(\BMO_p\)]
    For every nonincreasing locally $p$-integrable function $f$ in $\Rplus$,
    \begin{equation}
        \Vert Hf\Vert_{\BMO_p}\geq\alpha_p\Vert f\Vert_{\BMO_p}.
    \end{equation}
\end{conjecture}

\thispagestyle{empty}

\end{document}